
 \documentclass[11pt]{amsart}

\usepackage{a4}
\usepackage{amsfonts}
\usepackage{amsmath}
\usepackage{amsthm}
\usepackage{url}
\usepackage[all]{xy}
\usepackage{graphicx}
\usepackage{latexsym}
\usepackage{amssymb}
\usepackage[cp850]{inputenc}
\usepackage{epsfig}
\usepackage{psfrag}
\usepackage{amsfonts}
\usepackage{mathabx}
\usepackage{dsfont}

\newtheorem{sat}{Theorem}[section]		\newtheorem{lem}[sat]{Lemma}
\newtheorem{kor}[sat]{Corollary}			\newtheorem{prop}[sat]{Proposition}
				
\newtheorem*{defi*}{Definition}			\newtheorem*{bei*}{Example}
\newtheorem*{sat*}{Theorem}				\newtheorem*{kor*}{Corollary}
\newtheorem*{rmk*}{Remark}					


\let\ssection=\section
\renewcommand{\section}{\setcounter{equation}{0}\ssection}

\newtheorem*{namedtheorem}{\theoremname}
\newcommand{\theoremname}{testing}
\newenvironment{named}[1]{\renewcommand{\theoremname}{#1}\begin{namedtheorem}}{\end{namedtheorem}}

\theoremstyle{remark}
\newtheorem*{bem}{Remark}

\newcommand{\BR}{\mathbb R}			
\newcommand{\BN}{\mathbb N}			
\newcommand{\BS}{\mathbb S}			\newcommand{\BZ}{\mathbb Z}

\newcommand{\BONE}{\mathds 1}

		\newcommand{\CR}{\mathcal R}

\newcommand{\CW}{\mathcal W}

\newcommand{\actson}{\curvearrowright}

\newcommand{\D}{\partial}
\newcommand{\DD}{\nabla}

\DeclareMathOperator{\vol}{vol}		

\DeclareMathOperator{\diam}{diam}

\newcommand{\comment}[1]{}

\DeclareMathOperator{\Lip}{Lip}

\begin{document}

\title[]{Diameter and spectral gap for planar graphs}
\author{Larsen Louder and Juan Souto}
\thanks{Juan Souto has been partially supported by NSERC Discovery and Accelerator Supplement grants.}

\begin{abstract}
We prove that the spectral gap of a finite planar graph $X$ is bounded by $\lambda_1(X)\le C\left(\frac{\log(\diam X)}{\diam X}\right)^2$ where $C$ depends only on the degree of $X$. We then give a sequence of such graphs showing the the above estimate cannot be improved. This yields a negative answer to a question of Benjamini and Curien on the mixing time of the simple random walk on planar graphs.
\end{abstract}
\maketitle

\section{}
In this note we investigate the relationship between the diameters $\diam X$ of finite planar graphs $X$ and their spectral gaps, i.e.~the first non-zero eigenvalues $\lambda_1(X)$ of the associated combinatorial Laplacian. To avoid trivial cases, we will only consider connected graphs with $\diam X\ge 2$. Our first result is the following upper bound:

\begin{sat}\label{sat1}
For every $d$ there is $C_d$ with
$$\lambda_1(X)\le C_d\left(\frac{\log(\diam X)}{\diam X}\right)^2$$
for every finite connected planar graph $X$ with degree at most $d$.
\end{sat}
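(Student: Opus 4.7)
The plan is to bound $\lambda_1(X)$ via its variational characterization
\[
\lambda_1(X) = \inf\left\{ \frac{\sum_{uv \in E(X)}(f(u)-f(v))^2}{\sum_v f(v)^2} : f\not\equiv 0,\ \sum_v f(v)=0 \right\}
\]
by exhibiting an explicit test function $f$ whose Rayleigh quotient is at most $C_d\bigl(\log D/D\bigr)^2$, where $D=\diam X$. I expect the argument to split into cases according to the size $n:=|V(X)|$ relative to $D$.

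In the regime $n \ge D^2/\log^2 D$, I would invoke the Koebe--Andreev--Thurston circle packing theorem to realize $X$ as the tangency graph of a disk packing on $S^2$. After a M\"obius normalization (Hersch's trick) balancing the packing so that the centers of the disks have center of mass at the origin, the three coordinate functions of the disk centers serve as test functions. Their combined Dirichlet energy is bounded by a constant depending only on the maximum degree $d$, via the area constraint $\sum_v r_v^2\le 1$ on the sphere, while their combined $L^2$ norm equals $n$. This yields the Spielman--Teng-type bound $\lambda_1(X)\le C_d/n$, and the hypothesis on $n$ translates this to the desired estimate.

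In the complementary regime $n < D^2/\log^2 D$ the graph is ``thin''. Here I would fix vertices $v_0,v_D$ realizing the diameter and work with a test function of radial form $\phi(d(v,v_0))$ for some profile $\phi:\{0,\ldots,D\}\to\BR$. The Dirichlet energy then equals $\sum_r E(r)\,|\phi(r+1)-\phi(r)|^2$, where $E(r)$ counts edges between the spheres $S_r,S_{r+1}$ around $v_0$, while the $L^2$ norm is $\sum_r N(r)\phi(r)^2$ with $N(r)=|S_r|$. Planarity should enter through control of the sphere profile $(N(r))_r$, for instance via a Lipton--Tarjan-type separator theorem applied to balls.

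I expect the main obstacle to be precisely this thin regime. The naive linear choice $\phi(r)=r$ yields only $\lambda_1\le Cn/D^2$, which for $n\sim D^2/\log^2 D$ degrades to $C/\log^2 D$ --- much weaker than the required $(\log D/D)^2$. Overcoming this likely requires a planarity-based structural lemma preventing the sphere sizes $N(r)$ from concentrating at a single radius: in a planar graph of comparatively small volume, the bulk of the vertices must be spread across many distance scales, and the logarithmic factor in the theorem should arise from this averaging. Matching the two regimes at the crossover $n\sim D^2/\log^2 D$ is where I anticipate the most delicate balancing of parameters.
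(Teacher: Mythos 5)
Your two--regime decomposition is the right skeleton, and the dense regime is fine: the circle--packing/Hersch argument is exactly the Spielman--Teng bound $\lambda_1(X)\le C_d/\vol X$, which the paper simply quotes and which disposes of the case $\vol X\ge(\diam X/\log\diam X)^2$. The genuine gap is the thin regime, which is the heart of the theorem and which you explicitly leave open. Moreover, the direction you propose there is a wrong turn: no planarity-based structural lemma (separators, spreading of sphere sizes, etc.) is needed, and indeed the paper proves the thin case (Proposition \ref{construct-function}) for \emph{arbitrary} graphs satisfying $\vol X\le V(\diam X)^r$; planarity enters the proof of Theorem \ref{sat1} only through Spielman--Teng. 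The missing idea is a pigeonhole over annuli at scale $\diam X/\log\diam X$ rather than scale $1$. Set $k\approx\log(\diam X/2)$, take the ball $B(p,e^k)$ around an endpoint $p$ of a diameter, and partition $[0,e^k]$ into $k$ consecutive intervals $E_1,\dots,E_k$ of width $e^k/k$. Since $\vol(\delta_p^{-1}(E_1))\ge e^k/k$ while the total volume is at most $V(\diam X)^r\precapprox e^{rk}$, the masses $\vol(\delta_p^{-1}(E_j))$ cannot increase by a fixed factor $L>1$ (depending only on $V,r$) at every step; hence some $E_j$ has both neighbours of mass comparable to its own. The radial function which equals $e^k/k$ on $\delta_p^{-1}(E_j)$, ramps linearly up and down across $E_{j\pm1}$, and vanishes elsewhere then has Rayleigh quotient $\precapprox (k/e^k)^2\approx(\log\diam X/\diam X)^2$: its gradient is $1$ only on a set whose mass is comparable to that of the plateau, while the plateau height is $e^k/k$. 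This is exactly how the concentration scenario you worry about is defused, and it is where the logarithm comes from.

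A secondary issue: a single radial bump is not orthogonal to the constants, so it is not an admissible test function for your variational characterization, and centring it destroys the lower bound on its $L^2$ norm. The paper instead builds one such bump around each of the two endpoints $p_1,p_2$ of a diameter (the balls $B(p_i,e^k)$ are disjoint by the choice of $k$) and applies the minimax principle to the two--dimensional span of the two functions; because the supports are disjoint, the maximum of the Rayleigh quotient over that span is just the larger of the two individual quotients. You should adopt this device in the thin regime.
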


Theorem \ref{sat1} is not so surprising given that Spielman and Teng proved in \cite{ST} (see also \cite{KLPT}) that $\lambda_1(X)$ is bounded from above, up to a constant depending on the degree $d$ of $X$, by the reciprocal of the number of vertices:
\begin{equation}\label{eq1}
\lambda_1(X)\le C_d\frac 1{\vol X}
\end{equation}
In fact, Theorem \ref{sat1} follows easily from \eqref{eq1} and a simple computation. What may be more surprising, and this is the bulk of this paper, is that the bound provided by Theorem \ref{sat1} cannot be improved:

\begin{sat}\label{sat2}
There are positive numbers $d$ and $C$ and a sequence $(X_n)$ of pairwise distinct triangulations of $\BS^2$ of degree at most $d$ such that 
$$\lambda_1(X_n)\ge C\left(\frac{\log(\diam X_n)}{\diam X_n}\right)^2$$
for all $n$.
\end{sat}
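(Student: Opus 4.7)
The plan is to produce a family of spherical triangulations saturating the upper bound from Theorem \ref{sat1}. Combining Theorem \ref{sat1} with the Spielman-Teng bound \eqref{eq1} forces any extremal sequence to satisfy $\vol(X_n) \asymp (\diam X_n)^2/\log^2(\diam X_n)$ and $\lambda_1(X_n) \asymp 1/\vol(X_n)$; by the discrete Cheeger inequality this amounts to building a planar triangulation with Cheeger constant $h(X_n) \asymp \log(\diam X_n)/\diam X_n$, matching the Lipton-Tarjan planar-separator bound. The construction therefore must simultaneously saturate \eqref{eq1} and the Lipton-Tarjan bound while exhibiting the prescribed volume-diameter ratio.

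Concretely, we propose to discretize a sequence of closed hyperbolic surfaces $\Sigma_{g_n}$ of genus $g_n\to\infty$ with $\lambda_1(\Sigma_{g_n})\ge c_0>0$ and bounded injectivity radius above and below; congruence covers of a fixed arithmetic surface provide such a family via Selberg's $\tfrac14$-theorem. To render $\Sigma_{g_n}$ planar while simultaneously stretching its diameter to the target scale $\sqrt{g_n}\log g_n$, we cut along a carefully chosen system of simple closed geodesics, producing a planar surface $\tilde\Sigma_{g_n}$; we then triangulate with bounded-degree triangles of uniformly bounded mesh size and embed the result in $\BS^2$. The spectral lower bound is obtained from the discrete Cheeger inequality $\lambda_1\ge h^2/2$, combined with a transfer of the isoperimetric inequality on $\Sigma_{g_n}$ to the cut surface (and then to the discrete triangulation via Buser's inequality), using that a short-enough cut system can only reduce the ratio $|\partial A|/|A|$ by a factor of order $1/\sqrt{\vol}$.

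The main difficulty lies in engineering the cut system so that three requirements are met simultaneously: (i) the result is planar and triangulable as a sphere, (ii) the combinatorial diameter is of order $\sqrt{g_n}\log g_n$, which requires the cuts to ``unfold'' the surface rather than merely separate handles, and (iii) the continuous isoperimetric inequality survives with a loss of at most $1/\sqrt{g_n}$, matching the Lipton-Tarjan ceiling. Striking this delicate balance—cutting enough to planarize and stretch but not so much as to destroy the isoperimetry inherited from $\Sigma_{g_n}$—is the heart of the construction; once the cuts and triangulation are in place, verifying bounded degree and the claimed volume-diameter profile is a routine but necessary check.
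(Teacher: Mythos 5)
Your opening deduction is not correct, and it sends the whole plan toward an unnecessarily rigid (and much harder) target. Saturating the bound of Theorem \ref{sat1} only forces $\vol X_n\precapprox(\diam X_n/\log\diam X_n)^2$; it does \emph{not} force $\lambda_1(X_n)\asymp 1/\vol X_n$, nor $\vol X_n\asymp(\diam X_n/\log\diam X_n)^2$. Indeed the paper's examples have $\vol X_n\approx n^{11}$, $\diam X_n\approx n^{10}\log n$ and $\lambda_1(X_n)\approx n^{-20}$, so they are far from saturating either the Spielman--Teng bound or the Lipton--Tarjan separator bound. By insisting on the profile $\lambda_1\asymp 1/\vol$ with $\diam\asymp\sqrt{\vol}\log\vol$ you are committing to an extremal object whose existence is a separate (and unresolved) question, when a much slacker volume--diameter profile suffices.

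The construction itself has a gap at exactly the point you flag as ``the heart.'' To planarize a genus $g_n$ surface you must cut at least $g_n$ simple closed geodesics, each of length bounded below by the injectivity radius, so the total length of the cut system is $\succapprox g_n$, i.e.\ comparable to the \emph{area} of $\Sigma_{g_n}$; there is no sense in which the cut system is ``short,'' and no reason the isoperimetric inequality of $\Sigma_{g_n}$ transfers to the cut surface with a controlled loss (portions of $\D A$ lying along the cuts become free boundary and stop counting). Separately, and more fundamentally, your sketch contains no mechanism producing the $\log^2$ factor, which is the entire content of the theorem: a surface of area $V$ stretched \emph{uniformly} to diameter $D=\sqrt V\log V$ (a tube of width $\sqrt V/\log V$) has $\lambda_1\precapprox 1/D^2=1/(V\log^2V)$, losing exactly the factor you need. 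Beating $1/\diam^2$ by $\log^2\diam$ requires an exponentially growing width profile. This is what the paper does: it takes a trivalent expander $Z_n$, subdivides each edge into $\approx n^{10}$ edges to get $Y_n$ with $\lambda_1(Y_n)\approx n^{-20}$, forms the rotationally symmetric planar cylinder $A_n=([0,R(n)]\times\BS^1, dt^2+\sigma_n(t)^2d\theta^2)$ whose fiber length $\sigma_n(t)$ mimics the volume-growth density $\rho_{Y_n,p_n}(t)$ of $Y_n$ (which grows exponentially at rate $\approx\log(\diam)/\diam$), shows via a symmetrization argument (Lemma \ref{trick}) that first Neumann eigenfunctions of $A_n$ are radial and hence have Rayleigh quotient $\succapprox\lambda_1(Y_n)$, and finally discretizes $A_n$ and cones off the two boundary circles to get a triangulation of $\BS^2$. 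If you want to pursue a hyperbolic-surface route you would still need to inject an expander-like radial profile by hand; as written, the cut-and-transfer step is unsubstantiated and the quantitative target is off by the decisive $\log^2$ factor.
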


\begin{bem}
Notice that \eqref{eq1} implies that for the graphs $X_n$ provided by Theorem \ref{sat2} we have $\vol X_n<(\diam X_n)^2$ for all $n$ large enough.
\end{bem}

Recall the following well-known relation between $\lambda_1(X)$ and the {\em mixing time} $\tau(X)$ of the simple random walk on $X$:
\begin{equation}\label{mixing}
\frac 1{C\lambda_1(X)}\le\tau(X)\le C\frac{\log(\vol X)}{\lambda_1(X)}
\end{equation}
Here $C$ is a constant which depends only on the maximal degree of $X$. We refer to the reader to \cite{LPW} for the definition of and facts on mixing times; see Theorem 12.3 and Theorem 12.4 in \cite{LPW} for a proof of \eqref{mixing}. 

In the light of \eqref{mixing} we can translate the upper bound on $\lambda_1(X)$ provided by  Theorem \ref{sat1} to a lower bound on the mixing time:

\begin{kor}
For every $d$ there is $C_d$ positive with
$$\tau(X)\ge C_d\left(\frac{\diam X}{\log(\diam X)}\right)^2$$
for every finite connected planar graph $X$ with degree at most $d$. \qed
\end{kor}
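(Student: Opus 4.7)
The plan is to derive this corollary directly from Theorem~\ref{sat1} together with the left half of the mixing-time inequality \eqref{mixing}, so essentially no new work is required. Concretely, I would first invoke the lower bound
$$\tau(X)\ge \frac{1}{C\,\lambda_1(X)}$$
from \eqref{mixing}, where $C$ depends only on the maximal degree $d$ of $X$. Then I would substitute the upper bound on $\lambda_1(X)$ given by Theorem~\ref{sat1}, namely $\lambda_1(X)\le C_d\bigl(\log(\diam X)/\diam X\bigr)^2$, to obtain
$$\tau(X)\ge \frac{1}{C\cdot C_d}\left(\frac{\diam X}{\log(\diam X)}\right)^2,$$
and absorb the product of the two constants into a single constant, which I will still call $C_d$. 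This constant depends only on $d$, as required.

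Two small points merit a sentence of care. First, the hypothesis $\diam X\ge 2$ imposed at the start of the paper ensures $\log(\diam X)>0$, so the right-hand side is well defined and positive; this is why restricting to the non-trivial case there is convenient here. Second, one should note that the inequality \eqref{mixing} is stated for the simple random walk on a graph whose constants depend only on the maximal degree, which matches precisely the hypothesis that $X$ has degree at most $d$, so no additional regularity assumption on $X$ is needed.

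There is no serious obstacle; the corollary is purely a restatement of Theorem~\ref{sat1} under the dictionary \eqref{mixing}. If anything, the only thing to be slightly attentive to is bookkeeping the constants so that the final $C_d$ depends only on $d$ and not on $\diam X$ or $\vol X$; this is automatic because both ingredients \eqref{mixing} and Theorem~\ref{sat1} have constants of exactly this form.
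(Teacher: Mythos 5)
Your proposal is correct and is exactly the paper's intended argument: the corollary follows by combining the left-hand inequality of \eqref{mixing}, $\tau(X)\ge \frac{1}{C\lambda_1(X)}$, with the upper bound on $\lambda_1(X)$ from Theorem \ref{sat1}, and the constant bookkeeping is as you describe. Nothing further is needed.
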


Similary we obtain from Theorem \ref{sat2}, the remark after the statement of the theorem, and the right inequality in \eqref{mixing} that:

\begin{kor}\label{noBC}
There are $d$ and $C$ positive and a sequence $(X_n)$ of pairwise distinct triangulations of $\BS^2$ with degree at most $d$ and such that 
$$\tau(X_n)\le C\frac{(\diam X_n)^2}{\log(\diam X_n)}$$
for all $n$.\qed
\end{kor}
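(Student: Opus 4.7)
The plan is to chain together three ingredients that have already been set up: the lower bound on $\lambda_1(X_n)$ from Theorem \ref{sat2}, the volume estimate $\vol X_n < (\diam X_n)^2$ from the remark immediately after that theorem, and the right inequality of \eqref{mixing}, which reads $\tau(X)\le C\log(\vol X)/\lambda_1(X)$ with $C$ depending only on the maximal degree.

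First I would take $(X_n)$ to be the sequence of triangulations of $\BS^2$ produced by Theorem \ref{sat2}, noting that these all have degree bounded by some fixed $d$, so the constant in the right half of \eqref{mixing} is uniform in $n$. Discarding finitely many terms if necessary, we may assume the volume bound $\vol X_n<(\diam X_n)^2$ holds for every $n$.

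Next I would simply substitute: plugging the volume bound into the numerator of $C\log(\vol X_n)/\lambda_1(X_n)$ gives $\log(\vol X_n)\le 2\log(\diam X_n)$, and plugging the lower bound $\lambda_1(X_n)\ge C'(\log(\diam X_n)/\diam X_n)^2$ of Theorem \ref{sat2} into the denominator gives
\[
\tau(X_n)\;\le\;C\,\frac{\log(\vol X_n)}{\lambda_1(X_n)}\;\le\;\frac{2C}{C'}\cdot\frac{\log(\diam X_n)\cdot(\diam X_n)^2}{(\log(\diam X_n))^2}\;=\;\frac{2C}{C'}\cdot\frac{(\diam X_n)^2}{\log(\diam X_n)}.
\]
Absorbing the constants into a single $C$ yields the claimed bound.

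There is no real obstacle here; the only thing to be a little careful about is that all three inputs (the degree bound on $(X_n)$, the uniformity of $C$ in \eqref{mixing}, and the constant in Theorem \ref{sat2}) depend only on a common degree bound $d$, so that the final constant is indeed independent of $n$. In particular, one does not need to invoke Theorem \ref{sat1} or any further planarity input beyond what is already packaged into Theorem \ref{sat2} and the remark following it.
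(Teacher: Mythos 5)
Your proposal is correct and is exactly the argument the paper intends: the corollary is stated as an immediate consequence of Theorem \ref{sat2}, the remark on $\vol X_n<(\diam X_n)^2$, and the right-hand inequality of \eqref{mixing}, which is precisely the chain of substitutions you carry out. The only care needed is the one you already note, namely discarding finitely many terms so the volume bound holds throughout and checking that all constants depend only on the common degree bound.
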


Corollary \ref{noBC} yields a negative answer to a question asked by Benjamini and Curien \cite[Question 8]{BC}.
\smallskip

We now sketch the idea of the proof of Theorem \ref{sat2}. Given an expander $(Z_n)$ we consider, for each $n$, a graph $Y_n$ obtained by subdividing each edge of $Z_n$ into roughly $n^{10}$ edges. For each $n$ choose a point $p_n\in Y_n$ and let $A_n$ be a rotationally symmetric metric cylinder in which the $t$-th circle has length roughly equal to the number of points in $Y_n$ which are at distance $t$ from $p_n$. We prove that, under Neumann boundary conditions, $A_n$ satisfies the desired bound on the spectral gap and we deduce from the work of Mantuano \cite{Mantuano} that this implies that any discretization of $A_n$ does too. The desired graphs $X_n$ are constructed by completing triangulations of the annulus $A_n$ to triangulations of the sphere. 
\smallskip

Before concluding this introduction we would like to point out that Mantuano's work \cite{Mantuano} yields translations of Theorem \ref{sat1} and Theorem \ref{sat2} to the Riemannian setting. In the same spirit we wish to point out that \eqref{eq1} follows, again via \cite{Mantuano}, from the classical Yang-Yau theorem \cite{YY}.
\smallskip

\noindent{\bf Acknowledgements.} The second author is grateful for many conversations with Asaf Nachmias and Gourab Ray, without which this note would have never been possible. The second author thinks that the first author should thank him for all that beer. The first author thanks the second author for all that beer.
\smallskip

\noindent{\bf Notation:} Suppose that $(a_n)$ and $(b_n)$ are sequences of numbers. Throughout this paper we write $a_n\precapprox b_n$ (resp. $a_n\succapprox b_n$) if there is $C>0$ with $a_n<Cb_n$ (resp. $a_n>Cb_n$) for all $n$. If $a_n\precapprox b_n$ and $a_n\succapprox b_n$, then we write $a_n\approx b_n$.

\section{}

In this section we recall a few facts about Laplacians on graphs and Riemannian manifolds. We refer the reader \cite{Chavel,Chavel-little,Lubotzky} for details.

\subsection{}
Graphs will be denoted by capital letters $X,Y,Z,\dotsc$, often with super or subscripts. We denote by $V(X)$ the set of vertices and by $E(X)$ the set of edges of a graph $X$. We indicate by $x\sim y$ that two vertices of $X$ are joined by an edge. The valence, or degree, of a vertex $x\in V(X)$ is the number $\deg(x)$ of edges adjacent to $x$. The degree of a graph is the maximum of the degrees of its vertices. We assume, often implicitly, that all graphs are connected. 

The (combinatorial) Laplacian 
$$\Delta_X:\BR^{V(X)}\to \BR^{V(X)}$$
of a finite graph $X$ is the linear operator defined by
$$\Delta_X(f)(x)=\sum_{y\sim x}\left(f(x)-f(y)\right)$$
where we think of elements in $\BR^{V(X)}$ as functions on the finite set $V(X)$. With respect to the standard scalar product on $\BR^{V(X)}$, $\Delta_X$ is symmetric and positive semi definite and hence diagonalizable with spectrum
$$0=\lambda_0(X)<\lambda_1(X)\le\lambda_2(X)\le\dotsb$$
In this note we are only interested in the smallest positive eigenvalue $\lambda_1(X)$; this quantity is the {\em spectral gap} of the graph $X$. 

The {\em Rayleigh quotient} $\CR_X(f)$ of $f\in \BR^{V(X)}$ is defined to be
\begin{equation}\label{eq:ambrosia}
\CR_X(f)=\frac 12\frac{\sum_{x\sim y} (f(x)-f(y))^2}{\sum_{x\in V(X)}f(x)^2}
\end{equation}
Denoting by $\CW$ the space of all 2-dimensional linear subspaces of $\BR^{V(X)}$ we obtain the following characterization of $\lambda_1(X)$:
$$\lambda_1(X)=\min_{W\in\CW}\max_{f\in W}\CR_X(f)$$
This is the simplest incarnation of the so-called {\em minimax principle}. 

\begin{bem}
Let $f_1,f_2\in\BR^{V(X)}$ be non-zero elements and let $W\in\CW$ be their span. If $f_1,f_2$ have, when considered as functions, disjoint supports then 
$$\max_{f\in W}\CR_X(f)=\max\{\CR_X(f_1),\CR_X(f_2)\}$$
\end{bem}

\subsection{} So far we have considered graphs as purely combinatorial objects, but it will be useful to consider them also as 1-dimensional metric objects. To do so we identify each edge with the unit length interval $[0,1]$ and consider the induced inner distance on $X$. Associated to this distance we have the 1-dimensional Hausdorff measure $\lambda_X$ on $X$; this is just an arrogant way of referring to Lebesgue measure on edges. We denote by $\Lip(X)$ the set of Lipschitz functions $X\to\BR$ and identify $\BR^{V(X)}$ with the subset of $\Lip(X)$ consisting of functions which are linear on edges. 

Given an edge $e\simeq[0,1]$ and a point $x$ in the interior of $e$ we denote by $\DD f(x)$ the gradient of $f\in\Lip(X)$; it is defined almost everywhere by Rademacher's theorem. The Rayleigh quotient of $f\in\Lip(X)$ is defined to be:
\begin{equation}\label{eq:ambrosia2}
\CR_X(f)=\frac{\int_X\Vert\DD f(x)\Vert^2d\lambda_X(x)}{\int_X f(x)^2d\lambda_X(x)}
\end{equation}
Letting $\CW$ now denote the space of all 2-dimensional linear subspaces of $\Lip(X)$ we have again that
$$\lambda_1(X)=\min_{W\in\CW}\max_{f\in W}\CR_X(f)$$

\begin{bem}
  The factor of $\frac 12$ in \eqref{eq:ambrosia} is due to the fact
  that every edge is double counted. 
\end{bem}

 We remind the reader that a map
$$f:(X,d_X)\to(Y,d_Y)$$
between two metric spaces is an {\em $L$-quasi-isometry} if $f(X)$ is $L$-dense in $Y$ and if for all $x,x'\in X$ we have
$$\frac 1Ld_X(x,x')-L\le d_Y(f(x),f(x'))\le Ld_X(x,x')+L$$
Two metric spaces are {\em $L$-quasi-isometric} if there is an $L$-quasi-isometry between them. 
Quasi-isometric graphs have comparable spectral gaps \cite{Chavel-little}:

\begin{sat*}
For all $d$ and $L$ there is a constant $C$ such that the following holds: If $X$ and $Y$ are two $L$-quasi-isometric graphs of degree at most $d$, then
$$\frac 1C\lambda_1(X)\le\lambda_1(Y)\le C\lambda_1(X)$$
\end{sat*}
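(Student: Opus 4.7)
\medskip
\noindent\emph{Proof plan.}
By symmetry it suffices to produce $C=C(d,L)$ with $\lambda_1(Y)\le C\,\lambda_1(X)$. The strategy is to pull back a near-optimal test space from $X$ to $Y$ via a quasi-inverse of the quasi-isometry, and then invoke the min--max principle.

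Fix an $L$-quasi-isometry $f:V(X)\to V(Y)$ and construct a quasi-inverse $g:V(Y)\to V(X)$ by choosing, for each $y$, some $g(y)$ with $d_Y(f(g(y)),y)\le L$. Standard arguments show that $g$ is itself a quasi-isometry (with constant $L'=L'(L)$), that $d_X(g(f(x)),x)\le L'$, and that---since both graphs have degree at most $d$---every fibre $g^{-1}(x)$ has cardinality at most some $N=N(d,L)$. Define $T:\BR^{V(X)}\to\BR^{V(Y)}$ by $T\phi(y)=\phi(g(y))$. Whenever $y\sim y'$ one has $d_X(g(y),g(y'))\le 2L$, so I would fix a path $g(y)=x_0,\dots,x_k=g(y')$ of length $k\le 2L$ in $X$ and apply Cauchy--Schwarz to obtain $(T\phi(y)-T\phi(y'))^2\le 2L\sum_i(\phi(x_{i+1})-\phi(x_i))^2$. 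A ball-counting argument in $Y$ shows that each edge of $X$ appears in only boundedly many such paths, so summing produces
$$\sum_{y\sim y'}(T\phi(y)-T\phi(y'))^2\le C_1\sum_{x\sim x'}(\phi(x)-\phi(x'))^2,$$
and the fibre bound gives $\sum_y(T\phi(y))^2\le N\sum_x\phi(x)^2$.

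The main obstacle is the matching lower bound $\sum_y(T\phi(y))^2\ge c\sum_x\phi(x)^2$, which fails in general because $g$ need not be surjective: values of $\phi$ at vertices not in $g(V(Y))$ are invisible to $T\phi$. I would resolve this by a local Poincar\'e-type comparison: every $x\in V(X)$ lies within distance $L'$ of $g(f(x))\in g(V(Y))$, so telescoping $\phi$ along a short path from $x$ to $g(f(x))$ bounds $\phi(x)^2$ by $2\phi(g(f(x)))^2$ plus a controlled sum of squared edge-differences along that path. Summing over $x$ and using the fibre bound yields
$$\sum_x\phi(x)^2\le C_2\sum_y(T\phi(y))^2+C_3\sum_{x\sim x'}(\phi(x)-\phi(x'))^2.$$

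To conclude, take a 2-dimensional subspace $W\subset\BR^{V(X)}$ realizing $\lambda_1(X)$ via min--max. The two displayed estimates combine to give $\CR_Y(T\phi)\le C\,\CR_X(\phi)$ for every $\phi\in W$, the Dirichlet correction being absorbed because $\lambda_1(X)\le 2d$. The lower bound also forces $T|_W$ to be injective, so $T(W)$ is genuinely 2-dimensional, and applying the min--max principle on $Y$ yields $\lambda_1(Y)\le C\lambda_1(X)$. The technical heart of the argument---and what I expect to be the main obstacle---is the discrete Poincar\'e comparison needed for the lower bound, where the non-surjectivity of $g$ has to be controlled using only the bounded-degree assumption and the quasi-isometry constants.
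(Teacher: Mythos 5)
The paper itself does not prove this statement---it is quoted from Chavel's \emph{Isoperimetric Inequalities}---so there is no in-text argument to compare against. Your plan is the standard transfer argument (pull back a near-optimal two-dimensional test space through a quasi-inverse $g$ and compare Rayleigh quotients via min--max), and its main steps are sound: the fibre bound $\vert g^{-1}(x)\vert\le N(d,L)$, the bounded-multiplicity path argument controlling the Dirichlet form of $T\phi$, and the discrete Poincar\'e comparison handling the non-surjectivity of $g$ are all correct. (Minor quantitative slip: for $y\sim y'$ one gets $d_X(g(y),g(y'))\le L(3L+1)$ or so, not $2L$; this is harmless.)

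The one step that does not close as written is the final absorption. From your inequality $\sum_x\phi(x)^2\le C_2\sum_y(T\phi(y))^2+C_3\sum_{x\sim x'}(\phi(x)-\phi(x'))^2$ together with $\CR_X(\phi)\le\lambda_1(X)$ for $\phi\in W$, you obtain $\Vert\phi\Vert^2\le C_2\Vert T\phi\Vert^2+2C_3\lambda_1(X)\Vert\phi\Vert^2$, and absorbing the last term requires $2C_3\lambda_1(X)<1$. The bound $\lambda_1(X)\le 2d$ does not supply this, because $C_3=C_3(d,L)$ is in general large, so ``absorbed because $\lambda_1(X)\le 2d$'' is not a valid justification. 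The standard repair is a case distinction: if $\lambda_1(X)\ge(4C_3)^{-1}$, then $\lambda_1(Y)\le 2d\le 8dC_3\,\lambda_1(X)$ holds trivially and no transfer is needed; if $\lambda_1(X)<(4C_3)^{-1}$, the absorption goes through, yields $\Vert T\phi\Vert^2\ge(2C_2)^{-1}\Vert\phi\Vert^2$, and in particular gives the injectivity of $T\vert_W$ that you need for $T(W)$ to be an admissible competitor in the min--max on $Y$. With this modification the argument is complete.
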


\subsection{} Suppose that $M$ is a compact Riemannian manifold with possibly non-empty totally geodesic boundary $\D M$, and recall that in the presence of boundary we say that a function on $f$ is smooth if it admits a smooth extension to some manifold with empty boundary containing $M$. The Laplacian of a function $f\in C^\infty(M)$ is defined to be
$$\Delta(f)=-\hbox{div}\DD(f)$$
where $\hbox{div}$ stands for the divergence and $\DD$ for the gradient. The Rayleigh quotient of $f\in C^\infty(M)$ is again defined by
$$\CR_M(f)=\frac{\int_M\Vert\DD f(x)\Vert^2d\vol_M(x)}{\int_M f(x)^2d\vol_M(x)}$$
where $d\vol_M$ is the volume form of $M$.

Denoting by $\nu$ the inward normal vector field along $\D M$ we have by Green's theorem that
\begin{equation}\label{green}
\int_Mf\Delta g d\vol-\int_M\langle\DD f,\DD g\rangle d\vol=\int_{\D M}f\langle\DD g,\nu\rangle d\vol
\end{equation}
for all $f,g\in C^{\infty}(M)$; here $\langle\cdot,\cdot\rangle$ stands for the Riemannian metric. 

Denote by $C^\infty_{N}(M)$ the space of smooth functions $g\in C^\infty(M)$ satisfying Neumann's boundary condition $\langle\DD g,\nu\rangle=0$. It follows from \eqref{green} that the restriction of the Laplacian to $C^\infty_{N}(M)$ is a self-adjoint operator. Moreover, its spectrum is discrete and non-negative
$$0=\lambda_0(M)<\lambda_1(M)\le\lambda_2(M)\le\dots$$
In particular, $L^2(M)$ admits a Hilbert basis $(\phi_i)_i\subset C^\infty_{N}(M)$ consisting of eigenfunctions of $\Delta$:
$$\Delta(\phi_i)=\lambda_i(M)\phi_i$$ 
Green's theorem \eqref{green} implies that if $f\in C_N^\infty(M)$ is a $\lambda_i(M)$ eigenfunction, then $\lambda_1(M)=\CR_M(f)$.

As it is the case for graphs, the smallest positive eigenvalue $\lambda_1(M)$ can be again computed via the minimax principle
\begin{equation}\label{minmax-N}
\lambda_1(M)=\min_{W\in\CW_{N}}\max_{f\in W}\CR_M(f)
\end{equation}
where this time $\CW_{N}$ is the set of all 2-dimensional linear subspaces of $C^\infty_{N}(M)$. We derive a second version of the minimax principle:

\begin{lem}\label{lem-minimax}
Suppose that $M$ is a compact Riemannian manifold with totally geodesic boundary and let $\lambda_1(M)$ be the first non-zero eigenvalue of the Laplacian on $M$ with Neumann boundary conditions. Then 
$$\lambda_1(M)=\min_{W\in\CW}\max_{f\in W}\CR_M(f)$$
where $\CW$ is the set of all 2-dimensional linear subspaces of $C^\infty(M)$.
\end{lem}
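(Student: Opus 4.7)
Since $\CW_N \subseteq \CW$, the inequality $\min_{W \in \CW} \max_{f \in W} \CR_M(f) \leq \lambda_1(M)$ is immediate from \eqref{minmax-N}. For the reverse inequality, I would fix an arbitrary two-dimensional $W \subseteq C^\infty(M)$ and produce a nonzero $f \in W$ with $\CR_M(f) \geq \lambda_1(M)$. Since projection onto the one-dimensional span of the constant eigenfunction $\phi_0$ is a linear map $W \to \BR$, its kernel has dimension at least one, so there is some nonzero $f \in W$ with $\int_M f \phi_0 \, d\vol = 0$. This is the function I would use.

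Expand $f$ against the Hilbert basis of Neumann eigenfunctions: $f = \sum_{i \geq 1} c_i \phi_i$ in $L^2(M)$, and Parseval gives $\int_M f^2 \, d\vol = \sum_{i \geq 1} c_i^2$. For each $N$, set $f_N = \sum_{i=1}^N c_i \phi_i \in C_N^\infty(M)$ and $g_N = f - f_N \in C^\infty(M)$, so that $g_N$ is $L^2$-orthogonal to $\phi_1, \dotsc, \phi_N$. Two applications of Green's theorem \eqref{green}, each applied with $\phi_i$ playing the role of $g$ so that the Neumann condition $\langle \DD \phi_i, \nu\rangle = 0$ kills the boundary term, then give
$$
\int_M \langle \DD f_N, \DD g_N\rangle \, d\vol \;=\; \sum_{i=1}^N c_i \lambda_i \int_M g_N \phi_i \, d\vol \;=\; 0,
\qquad
\int_M |\DD f_N|^2 \, d\vol \;=\; \sum_{i=1}^N \lambda_i c_i^2.
$$
Expanding $|\DD f|^2 = |\DD f_N|^2 + 2 \langle \DD f_N, \DD g_N\rangle + |\DD g_N|^2$ pointwise and integrating then yields $\int_M |\DD f|^2 \, d\vol \geq \sum_{i=1}^N \lambda_i c_i^2 \geq \lambda_1(M) \sum_{i=1}^N c_i^2$.

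Letting $N \to \infty$ and applying Parseval to the right-hand side gives $\int_M |\DD f|^2 \, d\vol \geq \lambda_1(M) \int_M f^2 \, d\vol$, so $\CR_M(f) \geq \lambda_1(M)$, which finishes the argument. There is no serious analytic obstacle: the only point requiring care is that Green's theorem must be applied with $\phi_i$ (not $f$) as the argument of $\Delta$, so the boundary term vanishes even though $f \in C^\infty(M)$ need not satisfy Neumann's condition. This observation is precisely what lets us avoid a Sobolev-space density argument, and it is the only place where the Neumann hypothesis on the basis functions plays a role.
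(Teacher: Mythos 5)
Your proof is correct, but it takes a genuinely different route from the one in the paper. You work entirely inside $M$: in each two-dimensional $W$ you pick a nonzero $f$ with $\int_M f\phi_0\,d\vol=0$ and show $\CR_M(f)\ge\lambda_1(M)$ by expanding $f$ in the Hilbert basis of Neumann eigenfunctions and applying Green's theorem \eqref{green} with the eigenfunctions $\phi_i$ in the second slot, so that the boundary terms vanish even though $f$ itself need not satisfy the Neumann condition; this is indeed the crucial observation, and you have isolated it correctly. In effect you reprove the variational characterization $\lambda_1(M)=\min\{\CR_M(f)\mid f\in C^\infty(M)\setminus\{0\},\ \int_M f\,d\vol=0\}$, from which the two-dimensional minimax statement follows at once together with the trivial inclusion $\CW_N\subseteq\CW$. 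The paper instead passes to the double $M'$ of $M$: it pulls an arbitrary $f\in C^\infty(M)$ back to a Lipschitz, $\sigma$-invariant function on $M'$, approximates this by $\sigma$-invariant smooth functions whose gradients converge in $L^2$, pushes them down to functions in $C^\infty_N(M)$ whose Rayleigh quotients converge to $\CR_M(f)$, and then invokes \eqref{minmax-N}. Your argument buys a cleaner, purely spectral proof that uses only facts already recorded in the paper (discreteness of the Neumann spectrum and completeness of the eigenbasis) and avoids the approximation step, which in the paper leaves to the reader both the construction of the smoothings $f_n'$ and the verification that the maxima of $\CR_M$ over the approximating two-dimensional subspaces converge to the maximum over $W$. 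The doubling argument, for its part, shows with no extra effort that Lipschitz test functions are equally admissible in the minimax, which is closer in spirit to how the principle is used for graphs elsewhere in the paper, though only smooth test functions are actually needed on the manifold side.
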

\begin{proof}
Let $M'$ be the double of $M$, $\sigma:M'\to M'$ the isometric involution with $M'/\sigma=M$, and  $\pi:M'\to M$ the associated orbit map. For $f\in C^\infty(M)$, the function $f'=f\circ\pi$ is Lipschitz and hence is the uniform limit of a sequence of smooth functions $f_n'\in C^\infty(M')$ such that 
\begin{equation}\label{eq-smooth}
\lim_{n\to\infty}\Vert\DD f'-\DD f_n'\Vert=0\ \ \hbox{in}\  L^2(M')
\end{equation}

Replacing $f_n'$ by $\frac 12(f_n'+f_n'\circ\sigma)$ we may assume that the functions $f_n'$ are $\sigma$ invariant and hence descend under $\pi$ to smooth functions $f_n\in C^{\infty}_{N}(M)$. From \eqref{eq-smooth} we obtain that
$$\lim_{n\to\infty}\CR_M(f_n)=\CR_M(f)$$
The claim follows now from \eqref{minmax-N}.
\end{proof}

\subsection{}
%
%

We say that a closed Riemannian manifold $M$ has $d$-bounded geometry if 
\begin{itemize}
\item $M$ has at most dimension $d$, 
\item the sectional curvature is pinched by $\vert\kappa_M\vert\le d$, and
\item the length of the shortest closed geodesic is bounded from below by $\frac 1d$.
\end{itemize}
If $M$ is a compact manifold with totally geodesic boundary then we say that $M$ has $d$-bounded geometry if its double does.

\begin{bem}
Recall that if $M$ is closed then an upper bound on the sectional curvature and a lower bound on the length of the shortest closed geodesic yield a lower bound on the injectivity radius.
\end{bem}

Discretizations of Riemannian manifolds have often been used in the literature (see e.g. \cite{Chavel-little}). The following is a well-known fact:

\begin{lem}\label{discretize}
For every $d$ there are $L$ and $d'$ such that every compact Riemannian manifold $M$ with $d$-bounded geometry admits a triangulation $T$ whose 1-skeleton $T^{(1)}$ is a graph of valence at most $d'$ which is $L$-quasi-isometric to $M$.\qed
\end{lem}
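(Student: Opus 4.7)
My plan is to produce the triangulation $T$ from a maximal separated net, with mesh small relative to the injectivity radius of $M$. By the remark preceding the lemma, $d$-bounded geometry yields a uniform lower bound $i_0 = i_0(d) > 0$ on the injectivity radius of the double $M'$, hence also of $M$. I would fix $\epsilon = i_0/10$ and choose a maximal $\epsilon$-separated set $S \subset M$; by maximality the open balls $B(x, \epsilon)$ with $x \in S$ cover $M$, and by the separation condition the balls $B(x, \epsilon/2)$, $x \in S$, are pairwise disjoint.

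Second, I would promote $S$ to a simplicial triangulation. Since $\epsilon$ is much smaller than $i_0$ and the sectional curvature is bounded, every geodesic simplex with vertices in $S$ of diameter at most a fixed multiple of $\epsilon$ lies inside a convex geodesic ball; hence a Delaunay-type construction (after a generic perturbation of $S$ preserving the net property up to a factor of $2$) produces a geodesic triangulation $T$ of $M$ with vertex set $S$ and every edge of length at most $C(d)\cdot \epsilon$. Alternatively one may appeal to Whitehead's theorem on the existence of smooth triangulations and straighten an iterated barycentric subdivision against the net. Either way one obtains the combinatorial scaffolding on which to check the two remaining properties.

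Finally, the valence bound and the quasi-isometry follow from volume comparison. Bishop's inequality gives $\vol B(x, r) \leq V(r, d)$ for all $x \in M$ and $r > 0$, while the two-sided curvature bound supplies a positive lower bound $v(\epsilon, d) > 0$ on $\vol B(x, \epsilon/2)$; since these small balls are pairwise disjoint for $x \in S$, any ball of radius $r$ in $M$ contains at most $V(r, d)/v(\epsilon, d)$ points of $S$. Applied to $r = C(d) \cdot \epsilon$ this bounds the valence of $T^{(1)}$ by some $d' = d'(d)$. For the quasi-isometry, the inclusion $S \hookrightarrow M$ is $\epsilon$-dense by maximality, and edges of $T^{(1)}$ have Riemannian length at most $C(d)\cdot \epsilon$; conversely a Riemannian geodesic between $x, y \in S$ can be tracked by a sequence of at most $\lceil d_M(x,y)/\epsilon\rceil + 1$ consecutive points of $S$ which form a path in $T^{(1)}$, giving $d_{T^{(1)}}(x, y) \leq L\cdot d_M(x, y) + L$ for some $L = L(d)$. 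The main obstacle is really step two, namely producing an honest triangulation whose $1$-skeleton is the required graph; the valence and quasi-isometry bounds reduce to standard packing arguments once a net of the right mesh is in hand.
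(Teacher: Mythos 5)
The paper offers no proof of this lemma at all: it is stated as a well-known fact with a \qed, with a pointer to the discretization literature (e.g.\ Chavel's \emph{Isoperimetric Inequalities} and Mantuano's paper). Your argument is exactly the standard one that is being invoked --- maximal $\epsilon$-separated net with $\epsilon$ small against the injectivity radius, packing bounds for the valence, and density of the net plus bounded edge length for the quasi-isometry --- so in that sense you have supplied what the paper omits. Two caveats are worth flagging. First, you correctly identify step two as the real content, but your hedge between ``Delaunay after generic perturbation'' and ``Whitehead plus straightening'' hides a genuine subtlety: in dimension $\ge 3$ the Riemannian Delaunay complex of a dense net need not be a triangulation without further protection conditions, and Whitehead's theorem by itself gives no uniform mesh or valence control across a family of manifolds. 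For the paper's purposes this is harmless, since the lemma is only ever applied to the surfaces $A_n$, where Delaunay triangulations of dense generic nets do work. Second, the lemma is applied to manifolds with totally geodesic boundary, and the application requires the triangulation to restrict to a triangulation of $\partial M$ (the boundary cycles $D_1,D_2$ are subsequently coned off); your construction should therefore be run equivariantly on the double $M'$ with a net invariant under the involution and containing a net of the fixed locus, which is a routine but necessary adjustment you do not make explicit.
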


Abusing notation we will from now on make no distinction between triangulations and their $1$-skeleta.

To prove Theorem \ref{sat2} we will use the fact that quasi-isometric graphs and manifolds with bounded geometry have comparable spectral gaps. In \cite{Mantuano}, Mantuano proved such a result for closed manifolds via the minimax principle and a comparison of Rayleigh quotients of functions on the manifold and on the graph. The arguments used to compare Rayleigh quotients go through without any additional work in the presence of boundary. Moreover, since by Lemma \ref{lem-minimax} the first eigenvalue of the Laplacian with Neumann boundary conditions can be computed applying the minimax principle to {\em all} smooth functions on the manifold, we see that Mantuano's theorem applies also to manifolds with boundary:

\begin{sat*}[Mantuano \cite{Mantuano}]
For all $d$ and $L$ there is a constant $C$ such that the following holds: Suppose that $X$ is a graph with valence at most $d$, $M$ is a compact Riemannian manifold with $d$-bounded geometry and possibly non-empty boundary, and that $X$ and $M$ are $L$-quasi-isometric to each other, then
$$\frac 1C\lambda_1(X)\le\lambda_1(M)\le C\lambda_1(X)$$
where $\lambda_1(X)$ and $\lambda_1(M)$ are the first non-zero eigenvalues of the combinatorial Laplacian on $X$ and respectively the Laplacian with Neumann boundary conditions on $M$.
\end{sat*}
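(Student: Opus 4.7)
The plan is to follow Mantuano's proof in the closed case and identify precisely where the presence of a totally geodesic boundary, combined with the Neumann eigenvalue problem, forces a change of viewpoint. By the quasi-isometry invariance of spectral gaps for graphs together with Lemma \ref{discretize}, we may assume from the outset that $X$ is the 1-skeleton of a triangulation of $M$ whose valence is controlled in terms of $d$.

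Mantuano's argument in \cite{Mantuano} produces two comparison maps
$$\Phi:C^\infty(M)\to\BR^{V(X)},\qquad \Psi:\BR^{V(X)}\to C^\infty(M),$$
each of which distorts the Rayleigh quotient by a factor depending only on $d$ and $L$: one shows $\CR_X(\Phi f)\le C\,\CR_M(f)$ and $\CR_M(\Psi g)\le C\,\CR_X(g)$. The map $\Phi$ is essentially restriction to the vertex set, while $\Psi$ is a smoothed piecewise-linear extension built via a partition of unity subordinate to geodesic balls of fixed radius centred at the vertices. The uniform constants come from the bounded-geometry hypothesis applied to the double $M'$: Bishop--Gromov volume comparisons, bounded multiplicity of the cover (controlled by the injectivity radius and the sectional curvature bound), and a uniform Lipschitz bound on the partition of unity. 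Since $\D M$ is totally geodesic, each chart near $\D M$ is the restriction of a chart on $M'$ about a fixed point of the involution, and $\Psi(g)$ is literally the restriction to $M$ of a smooth function on $M'$ --- hence smooth up to $\D M$, but with no reason to satisfy Neumann's condition.

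To pass from the Rayleigh-quotient comparison to the spectral gap comparison one invokes the minimax principle on each side. For $\lambda_1(M)\le C\lambda_1(X)$, take a 2-dimensional subspace $W\subset\BR^{V(X)}$ attaining the graph minimax; arranging $\Psi$ to be injective on a chosen basis of $W$, its image is a 2-dimensional subspace of $C^\infty(M)$ whose Rayleigh quotients are bounded by $C\lambda_1(X)$, and Lemma \ref{lem-minimax} converts this into the desired bound on $\lambda_1(M)$. The reverse inequality $\lambda_1(X)\le C\lambda_1(M)$ is dual: Lemma \ref{lem-minimax} produces a 2-dimensional $W\subset C^\infty(M)$ on which $\CR_M$ is at most $\lambda_1(M)$, and $\Phi(W)$ then witnesses the bound via the graph minimax principle of Section~2.

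The only genuine obstacle --- and the whole reason a separate argument is needed at all --- is that the smoothing $\Psi$ produces functions that need not satisfy Neumann's condition, so without Lemma \ref{lem-minimax} these functions could not be fed into the variational characterization of $\lambda_1(M)$. Once that lemma is in hand, the rest is bookkeeping: one reruns Mantuano's local volume, overlap, and interpolation estimates in half-ball charts near $\D M$, observing that each of them is the restriction to $M$ of the corresponding estimate on the double $M'$, so that all constants depend only on $d$ and $L$.
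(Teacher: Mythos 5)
Your proposal is correct and follows essentially the same route as the paper: the paper likewise observes that Mantuano's Rayleigh-quotient comparison maps work unchanged near a totally geodesic boundary (via the double), and that the only new ingredient needed is Lemma \ref{lem-minimax}, which lets the minimax for the Neumann $\lambda_1(M)$ range over all of $C^\infty(M)$ so that the smoothed extensions, which need not satisfy the Neumann condition, are admissible test functions. You have in fact spelled out more of Mantuano's machinery than the paper does, but the key idea is identical.
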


\section{}
In this section we prove Theorem \ref{sat1}; we also introduce some notation used later on. 

\subsection{}
Suppose that $X$ is a finite graph considered as a 1-dimensional metric space. For $p\in X$ we have the distance function
\begin{equation}\label{eq-distance}
\delta_p:X\to\BR,\ x\mapsto d(p,x)
\end{equation}
with range $[0,\diam_p(X)]$ where
\begin{equation}\label{eq-point-diam}
\diam_p(X)=\max\{d(p,x)\mid x\in X\}
\end{equation}
Notice that $\frac 12\diam(X)\le\diam_p(X)\le\diam(X)$.

The measure $(\delta_p)_*(\lambda_X)$ obtained by pushing the Lebesgue measure $\lambda_X$ of $X$ forward via $\delta_p$ is absolutely continuous with respect to the Lebesgue measure. We denote by 
$$\rho_{X,p}:\BR\to\BR^+$$
the Radon-Nicodym derivative of $(\delta_p)_*(\lambda_X)$ with respect to $dt$, meaning that for all continuous functions $f$ we have
\begin{equation}\label{eq:density}
\int_\BR f(t)\rho_{X,p}(t)dt=\int_X (f\circ\delta_p)(x)d\lambda_X(x)
\end{equation}
Notice that $\rho_{X,p}$ takes integer values and that $\rho_{X,p}(t)\ge 1$ for all $t\in[0,\diam_p(X)]$ and $\rho_{X,p}(t)=0$ otherwise. We state a useful observation:

\begin{lem}\label{tools}
Suppose that $X$ is a finite graph, $p\in X$ is a base point and 
$$F:[0,\diam_p(X)]\to\BR$$
is a Lipschitz function, then
$$\CR_X(F\circ\delta_p)=\frac{\int_0^{\diam_p(X)}F'(t)^2\rho_{X,p}(t)dt}{\int_0^{\diam_p(X)}F(t)^2\rho_{X,p}(t)dt}$$
\end{lem}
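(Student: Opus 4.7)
The plan is to unfold the definition \eqref{eq:ambrosia2} of $\CR_X(F\circ\delta_p)$ and reduce both integrals over $X$ to integrals over $\BR$ via the pushforward formula \eqref{eq:density}.

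First, I would observe that $\Vert\DD\delta_p(x)\Vert=1$ for almost every $x\in X$. Indeed, on the interior of any edge $e\simeq[0,1]$ with endpoints $u,v$, any shortest path from $p$ to an interior point of $e$ must enter $e$ through $u$ or $v$, so $\delta_p$ restricted to $e$ coincides with $\min(\delta_p(u)+s,\, \delta_p(v)+(1-s))$, which is piecewise linear with slopes $\pm1$. This is the only geometric input.

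For the denominator, applying \eqref{eq:density} to $f(t)=F(t)^2$ (extended by $0$ outside $[0,\diam_p(X)]$, where $\rho_{X,p}$ vanishes anyway) gives
$$\int_X F(\delta_p(x))^2\,d\lambda_X(x)=\int_0^{\diam_p(X)}F(t)^2\rho_{X,p}(t)\,dt.$$
For the numerator, Rademacher's theorem and the chain rule (applied edge by edge) yield
$$\DD(F\circ\delta_p)(x)=F'(\delta_p(x))\,\DD\delta_p(x)\quad\text{for a.e. }x\in X,$$
so combining with the first step $\Vert\DD(F\circ\delta_p)(x)\Vert^2=F'(\delta_p(x))^2$ almost everywhere. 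A second application of \eqref{eq:density}, this time with $f(t)=F'(t)^2$, gives
$$\int_X \Vert\DD(F\circ\delta_p)(x)\Vert^2\,d\lambda_X(x)=\int_0^{\diam_p(X)}F'(t)^2\rho_{X,p}(t)\,dt.$$
Dividing the two identities gives the claimed formula.

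The one mild subtlety is that \eqref{eq:density} is phrased for continuous $f$, but $F^2$ is continuous and $(F')^2$ is only bounded measurable; this is handled by a standard approximation of $(F')^2$ in $L^1$ by continuous functions, using that $F$ is Lipschitz and hence $F'\in L^\infty$. This approximation is the only technical point, and it is routine, so I expect no real obstacle in the proof.
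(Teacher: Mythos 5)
Your proof is correct, and it is precisely the elementary computation the paper has in mind: the authors explicitly leave the proof of Lemma \ref{tools} to the reader, and your argument (that $\Vert\DD\delta_p\Vert=1$ a.e.\ since $\delta_p$ is piecewise affine with slopes $\pm1$ on each edge, followed by the chain rule and the pushforward identity \eqref{eq:density}) is the standard way to fill that gap. The measurability point you flag at the end is indeed routine, since $(\delta_p)_*\lambda_X=\rho_{X,p}\,dt$ as measures and the change-of-variables formula therefore holds for all bounded measurable integrands, not just continuous ones.
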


The proof of Lemma \ref{tools} is elementary and we leave it to the reader.\qed

\subsection{}
Armed with the notation we just introduced, we prove the following general fact about graphs:

\begin{prop}\label{construct-function}
For all $V$ and $r$ there is $C$ such that if $X$ is a finite graph satisfying $\vol X\le V(\diam X)^r$, then 
$$\lambda_1(X)\le C\left(\frac{\log(\diam X)}{\diam X}\right)^2$$
\end{prop}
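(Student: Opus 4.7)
My plan is to apply the minimax principle to a test function of the form $F \circ \delta_p$ on $X$ and evaluate its Rayleigh quotient via Lemma~\ref{tools}. Pick any basepoint $p \in X$ with $D := \diam_p X \ge R/2$, where $R = \diam X$; the density $\rho := \rho_{X,p}$ then satisfies $\rho \ge 1$ on $[0, D]$ and $\int_0^D \rho\, dt = \vol X \le V R^r$. It suffices to exhibit a Lipschitz $F\colon [0, D] \to \BR$ for which the induced weighted Rayleigh quotient $\int F'^2 \rho / \int (F - \bar F)^2 \rho$ on $[0, D]$ is at most $C(\log R/R)^2$, since then Lemma~\ref{tools} combined with the minimax principle applied to $\Span\{1, F \circ \delta_p\}$ bounds $\lambda_1(X)$ directly.

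The key idea is a pigeonhole on logarithmic scales. Decompose $[0, D]$ into the $O(\log R)$ dyadic annuli $I_k := [D \cdot 2^{-k-1}, D \cdot 2^{-k}]$ for $k = 0, 1, \ldots, \lfloor \log_2 D \rfloor$. Since $\sum_k \int_{I_k}\rho \le V R^r$, by pigeonhole there are at least two well-separated indices $k_1, k_2$ satisfying $\int_{I_{k_i}} \rho \precapprox V R^r/\log R$. I would then define $\phi_1, \phi_2 \in \Lip([0, D])$ with disjoint supports, each $\phi_i$ a suitably scaled sine or logarithmic bump supported on $I_{k_i}$. By the remark following Lemma~\ref{tools}, for the disjoint-support test functions $F_i := \phi_i \circ \delta_p$ the maximum Rayleigh quotient on $\Span\{F_1, F_2\}$ equals $\max\{\CR_X(F_1), \CR_X(F_2)\}$. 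A direct computation via Lemma~\ref{tools}, using the pigeonhole mass bound $\int_{I_{k_i}}\rho \precapprox V R^r/\log R$ together with the lower bound $\rho \ge 1$ to control the denominator, should produce $\CR_X(F_i) \precapprox (\log R/R)^2$ for each $i$, whence the minimax principle closes the argument.

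The main obstacle is that a naive single-scale bump on a dyadic annulus $I_k$ produces Rayleigh quotient $\precapprox (\int_{I_k}\rho)/|I_k|^3$, and even after the mass pigeonhole this is only $V R^{r-3}(\log R)^2$---which exceeds the target $(\log R/R)^2$ when $r \ge 2$, the case needed for the proof of Theorem~\ref{sat1}. The logarithmic factor in the final bound must therefore emerge jointly from the pigeonhole (selecting a favorable scale for the mass) and from the multi-scale disjoint-supports trick; choosing the bump shape (sine versus logarithmic) so that the lower bound $\rho \ge 1$ simultaneously controls the denominator at both scales is the delicate crux of the argument.
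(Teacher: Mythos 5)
Your overall framework is right — distance function, Lemma~\ref{tools}, two test functions with disjoint supports, minimax — but the pigeonhole you propose is the wrong one and you have not resolved the obstacle you yourself identify. A dyadic decomposition with an \emph{additive} mass pigeonhole (some $I_k$ has $\int_{I_k}\rho\precapprox V R^r/\log R$) gives, as you observe, only $\CR\precapprox (\int_{I_k}\rho)/|I_k|^3$, and this is genuinely too large when $r\ge 2$ (the case required to combine with Spielman--Teng). The real difficulty is that a single-interval bump must vanish at both ends of its support, so the denominator $\int \phi^2\rho$ can only be bounded from below using $\rho\ge 1$, which is far too lossy; you gesture at fixing this via the bump shape, but sine and logarithmic profiles suffer from the same problem.

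The paper's fix is structurally different and is the idea you are missing. Instead of dyadic annuli, partition $[0,e^k]$ (with $e^k\approx\diam X/2$, $k\approx\log\diam X$) into $k$ intervals $E_1,\dots,E_k$ of \emph{equal} length $e^k/k$. Since $\rho\ge 1$, each has mass $\ge e^k/k$; if the masses grew geometrically by a factor $L$ along the whole sequence, the last one would have mass $\ge L^{k-1}e^k/k$, contradicting $\vol X\le V(\diam X)^r$ once $L$ exceeds a constant depending only on $V,r$. This is a \emph{multiplicative} pigeonhole: it yields a middle index $j$ with $\int_{E_{j-1}}\rho+\int_{E_{j+1}}\rho\precapprox \int_{E_j}\rho$. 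Then one takes a trapezoidal bump that is affine with slope $\pm 1$ on $E_{j\mp 1}$ and \emph{constant at height $e^k/k$ on $E_j$}. The plateau is the key: the denominator is bounded below by $(e^k/k)^2\int_{E_j}\rho$ (using only the plateau's contribution, not $\rho\ge 1$), while the numerator equals $\int_{E_{j-1}}\rho+\int_{E_{j+1}}\rho\precapprox\int_{E_j}\rho$. The masses cancel, giving $\CR\precapprox (k/e^k)^2\approx(\log\diam X/\diam X)^2$, uniformly in $r$. Two such functions supported in disjoint balls around diametrically opposite points $p_1,p_2$ then close the argument via minimax, exactly as you had in mind.
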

\begin{proof}
Let $k\in\BZ$ be the integer part of $\log\frac{\diam(X)}2$ and notice that this implies that if $d(p_1,p_2)=\diam(X)$ then the balls $B(p_i,e^k)$ in $X$ of radius $e^k$ centered at $p_1$ and $p_2$ are disjoint. For $i=1,2$ we are going to construct, as long as the diameter of $X$ is over some threshold depending on $V$ and $r$, a function $f_i\in C^\infty(X)$ supported by $B(p_i,e^k)$ and with Rayleigh quotient 
$$\CR_X(f_i)\le (1+\log(r+2))\frac k{e^k}$$
Once this is done, the desired claim follows from the choice of $k$ because as we noticed earlier
$$\lambda_1(X)\le\max\{\CR_X(f_1),\CR_X(f_2)\}$$
For the sake of concreteness assume that $i=1$ and set $p=p_1$. Consider the distance function
$$\delta=\delta_{p}:X\to[0,\diam(X)]$$
and let $\rho=\rho_{X,p}$ be the function satisfying \eqref{eq:density}. Notice that
$$\int_0^{e^k}\rho(t)dt=\vol(B(p,e^k))\le eVe^{rk}$$
by the choice of $k$ and the assumption on the volume of $X$. Divide the interval $[0,e^k]$ into $k$ consecutive intervals $E_1,\dots,E_k$ of equal length $\frac{e^k}k$; set also $E_0=\emptyset$. Notice that 
$$\int_{E_1}\rho(t)dt=\vol(\delta^{-1}(E_1))\ge\frac{e^k}k$$
and suppose that for some $L$ we have 
$$\int_{E_{j+1}}\rho(t)dt\ge L\int_{E_{j+1}}\rho(t)dt$$
for all $j=1,\dots,k-1$. Then we get from the bound on $\vol X$ that
$$eVe^{rk}\ge\vol(\delta^{-1}(E_k))=\int_{E_{k}}\rho(t)dt\ge L^{k-1}\int_{E_{1}}\rho(t)dt\ge L^{k-1}\frac{e^k}k$$
It follows that (with finitely many graphs $X$ as possible exceptions) we have $L<\log(r+2)$. In particular, as long as $\diam(X)$ is over some threshold, there is $j\in\{1,\dots,k-1\}$ with
\begin{equation}\label{eq-bla1}
\int_{E_{j-1}}\rho(t)dt+\int_{E_{j+1}}\rho(t)dt\le (1+\log(r+2))\int_{E_{j}}\rho(t)dt
\end{equation}
Consider now the Lipschitz function $F:[0,\diam(X)]\to\BR$ given by
\begin{align*}
&F(t)=0 & \ \ \hbox{if}\ t\notin E_{j-1}\cup E_j\cup E_{j+1}\\
&F(t)=t-\frac{(j-1)e^{k}}k & \ \ \hbox{if}\ t\in E_{j-1}\\
&F(x)=\frac{e^k}k & \ \ \hbox{if}\ t\in E_{j}\\
&F(x)=\frac{(j+2)e^k}k-t & \ \ \hbox{if}\ t\in E_{j+1}
\end{align*}
and notice that 
\begin{align*}
&F'(t)=0 & \ \ \hbox{if}\ t\notin E_{j-1}\cup E_{j+1}\\
&\vert F'(t)\vert=1 & \ \ \hbox{if}\ t\in E_{j-1}\cup E_{j+1}
\end{align*}
The function $F\circ\delta$ is Lipschitz, supported by $B(p,e^k)$ and satisfies
\begin{equation}\label{eq-bla2}
\int_0^{\diam(X)} F'(t)^2\rho(t)dt=\int_{E_{j-1}}\rho(t)dt+\int_{E_{j+1}}\rho(t)dt
\end{equation}
On the other hand, we have
\begin{equation}\label{eq-bla3}
\int_0^{\diam(X)} F(t)^2\rho(t)dt\ge\int_{E_j}f(t)^2\rho(t)dt=\frac{e^k}k\int_{E_j}\rho(t)dt
\end{equation}
Combining Lemma \ref{tools} with equaltions \eqref{eq-bla1}, \eqref{eq-bla2} and \eqref{eq-bla3} we obtain for $f=F\circ\delta$ that
$$\CR_X(f)=\CR_X(F\circ\delta)\le(1+\log(r+2))\frac k{e^k}$$
Which is what we needed to prove.
\end{proof}

\subsection{}
Spielman and Teng \cite{ST} (see also \cite{KLPT}) proved:

\begin{sat*}[Spielman-Teng]
For every $d$ there is $C_d$ with
$$\lambda_1(X)\le C_1\frac 1{\vol X}$$
for every finite planar graph $X$ of degree at most $d$.
\end{sat*}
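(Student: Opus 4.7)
My plan is to follow the circle-packing approach due to Spielman and Teng.

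First, I would invoke the Koebe--Andreev--Thurston theorem to realize $X$ (after passing if necessary to a triangulated supergraph, using monotonicity of $\lambda_1$ in edges) as the tangency graph of a packing $\{D_v\}_{v\in V(X)}$ of flat disks in $\BR^3$ whose boundary circles lie on the unit sphere $S^2$. Let $c_v\in\BR^3$ denote the center of $D_v$ and $r_v$ its Euclidean radius, so that $|c_v|^2+r_v^2=1$.

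Second, I would perform a M\"obius centering step: there exists a conformal automorphism of $S^2$ after which the normalized packing satisfies $\sum_v c_v=0$. Consequently each coordinate function $f_i(v)=(c_v)_i$, for $i=1,2,3$, is orthogonal to the constant function $\BONE\in\BR^{V(X)}$. Since $\Delta_X\BONE=0$, the two-dimensional subspace $W_i=\Span\{\BONE,f_i\}$ satisfies $\max_{f\in W_i}\CR_X(f)=\CR_X(f_i)$, and the minimax principle gives $\lambda_1(X)\le\CR_X(f_i)$ for each $i$.

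Third, I would bound the Rayleigh quotient. If $u\sim v$ then the tangency point $p\in\partial D_u\cap\partial D_v$ satisfies $|c_u-p|=r_u$ and $|p-c_v|=r_v$, so by the triangle inequality $|c_u-c_v|\le r_u+r_v$. Applying the elementary fact $\min_i a_i/b_i\le\sum_i a_i/\sum_i b_i$ to the three Rayleigh quotients and summing over coordinates,
$$\lambda_1(X)\le\min_i\CR_X(f_i)\le\frac{\tfrac12\sum_{u\sim v}|c_u-c_v|^2}{\sum_v|c_v|^2}\le\frac{2d\sum_v r_v^2}{\vol(X)-\sum_v r_v^2}.$$
The area estimate $\sum_v\pi r_v^2\le\mathrm{area}(S^2)=4\pi$ bounds $\sum_v r_v^2$ by a constant depending only on $d$, and the denominator is at least $\vol(X)-4/\pi$, so the bound $\lambda_1(X)\le C_d/\vol(X)$ follows (small $\vol X$ being handled trivially).

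The step I expect to be the main obstacle is the M\"obius centering. Although classical, it is not a direct calculation but a topological argument: one considers the map from the Poincar\'e ball (parametrizing the conformal automorphisms of $S^2$) to $\BR^3$ sending an automorphism $\mu$ to the centroid of the displaced packing, and uses degree theory together with an analysis of the boundary behaviour to show that this map is surjective onto the open ball, so that the centroid can be driven to the origin.
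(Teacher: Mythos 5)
The paper does not prove this statement: it is imported as a black box from Spielman--Teng \cite{ST}, so there is no internal argument to compare against. Your sketch is essentially the original circle-packing proof from \cite{ST}, and it is sound in outline: Koebe realization on $\BS^2$, M\"obius centering so that the coordinate functions are orthogonal to $\BONE$, the tangency estimate $|c_u-c_v|\le r_u+r_v$, and the disjointness bound $\sum_v\pi r_v^2\le 4\pi$ (so $\sum_v r_v^2\le 4$, not $4/\pi$ --- a harmless slip). Two points are worth tightening. First, passing to a triangulated supergraph can destroy the degree bound; either invoke Koebe's theorem for arbitrary finite simple planar graphs (existence needs no triangulation), or note that the Rayleigh numerator is summed only over edges of $X$ itself, so only $\deg_X\le d$ enters via $\sum_{u\sim v}(r_u^2+r_v^2)\le d\sum_v r_v^2$ and the supergraph's degrees are irrelevant. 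Second, you rightly identify the centering step as the real content: the naive centroid map from the Poincar\'e ball need not extend continuously to the boundary sphere (a single large cap can obstruct convergence), and the published arguments handle this by a perturbation or a Schramm-type fixed-point argument rather than bare degree theory; a complete write-up must address that discontinuity explicitly.
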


Theorem \ref{sat1} follows immediatelly from the Spielman-Teng theorem and Proposition \ref{construct-function}.:

\begin{named}{Theorem \ref{sat1}}
For every $d$ there is $C_d$ with
$$\lambda_1(X)\le C_d\left(\frac{\log(\diam X)}{\diam X}\right)^2$$
for every finite connected planar graph $X$ with degree at most $d$.\qed
\end{named}

\section{}

In section \ref{sec:meat} we will prove:

\begin{prop}\label{meat}
There is $d$ and a sequence $A_n$ of Riemannian surfaces homeomorphic to $\BS^1\times[0,1]$, with totally geodesic boundary and $d$-bounded geometry, and such that:
$$\vol(A_n)\approx n^{11},\ \diam(A_n)\approx n^{10}\log(n),\ \ \lambda_1(A_n)\approx n^{-20}$$
Moreover, each component of $\D A_n$ has unit length. 
\end{prop}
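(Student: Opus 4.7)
The plan is to construct $A_n$ as an explicit warped-product cylinder and analyze its spectrum directly via the rotational symmetry. Set $c_n = n^{10}$ and $D_n \approx c_n \log n$, and fix a smooth even function $f_n \colon [-D_n, D_n] \to \BR^+$ whose main behavior is $f_n(t) = \frac{n}{2\pi} e^{-|t|/c_n}$, smoothly rounded in small neighborhoods of $t = 0$ and $t = \pm D_n$ so that $f_n$ is smooth with $f_n'(0) = f_n'(\pm D_n) = 0$ and $f_n(\pm D_n) = \frac{1}{2\pi}$. Let $A_n := [-D_n, D_n] \times \BS^1$ with metric $ds^2 = dt^2 + f_n(t)^2 d\theta^2$. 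The condition $f_n'(\pm D_n) = 0$ makes the boundary circles totally geodesic, and by construction each has unit length. The Gauss curvature of a surface of revolution is $K = -f_n''/f_n$, which on the exponential region equals $-c_n^{-2}$ and is uniformly bounded on the rounded regions; the shortest closed geodesic of the double is either a boundary circle (length $1$) or the waist at $t=0$ (length $\approx n$), so $A_n$ has $d$-bounded geometry for some uniform $d$. Straightforward integrations then yield $\vol(A_n) = 2\pi \int_{-D_n}^{D_n} f_n(t)\, dt \approx n \cdot c_n = n^{11}$ and $\diam(A_n) \approx 2 D_n \approx n^{10}\log n$.

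For the upper bound, I apply Lemma~\ref{lem-minimax} with the smooth function $u(t, \theta) = t$, which is orthogonal to the constants by symmetry. Since $|\DD u| \equiv 1$ we have $\int_{A_n} |\DD u|^2\, d\vol = \vol(A_n) \approx n^{11}$, while the substitution $s = t/c_n$ gives
\[
\int_{A_n} u^2 \, d\vol \approx 2n \int_0^{D_n} t^2 e^{-t/c_n}\, dt = 2n c_n^3 \int_0^{\log n} s^2 e^{-s}\, ds \approx n^{31},
\]
whence $\CR_{A_n}(u) \approx n^{-20}$ and therefore $\lambda_1(A_n) \precapprox n^{-20}$.

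For the lower bound, I use the rotational symmetry to decompose an arbitrary trial function $u(t, \theta) = \sum_k \phi_k(t) e^{ik\theta}$; the Rayleigh quotient decouples as
\[
\CR_{A_n}(u) = \frac{\sum_k \bigl(\int (\phi_k')^2 f_n\, dt + k^2 \int \phi_k^2 f_n^{-1}\, dt\bigr)}{\sum_k \int \phi_k^2 f_n\, dt}.
\]
For modes with $k \ne 0$ the bound $\int \phi_k^2/f_n\,dt \,/\, \int \phi_k^2 f_n\,dt \ge 1/(\max f_n)^2$ forces $\CR_{A_n}(u) \succapprox 1/n^2$, which is far larger than $n^{-20}$. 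The $k = 0$ mode reduces to a 1-dimensional weighted Neumann problem on $[-D_n, D_n]$ with density $f_n \approx e^{-|t|/c_n}$. The key input is the Poincaré inequality for measures of Laplace type: the spectral gap of $e^{-|t|/c_n}\, dt$ on $\BR$ is comparable to $c_n^{-2}$, and passing to the bounded interval $[-D_n, D_n]$ with Neumann boundary can only raise the gap, so the $k=0$ mode contributes a gap $\succapprox c_n^{-2} = n^{-20}$. Combining both cases yields $\lambda_1(A_n) \succapprox n^{-20}$. The main obstacle is verifying the 1D Poincaré inequality cleanly against the smoothed profile; I would handle this by restricting to odd-symmetric trial functions (which are automatically $f_n\, dt$-orthogonal to the constants and vanish at $t=0$) and then applying a weighted Hardy inequality on $[0, D_n]$ with Dirichlet condition at $0$, for which the exponential weight $e^{-t/c_n}$ yields a constant of order $c_n^2$.
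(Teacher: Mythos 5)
Your construction is correct, and it takes a genuinely different route from the paper. The paper does not write down an explicit profile: it builds a graph $Y_n$ by subdividing each edge of a trivalent expander $Z_n$ into roughly $n^{10}$ edges (so that $\lambda_1(Y_n)\approx n^{-20}$ automatically), perturbs the root so that the sphere-counting density $\rho_{Y_n,p_n}$ has only small jumps, and takes $\sigma_n\approx\rho_{Y_n,p_n}$ as the warping function; the lower bound $\lambda_1(A_n)\succapprox n^{-20}$ is then obtained by comparing Rayleigh quotients of rotationally invariant functions on $A_n$ with Rayleigh quotients of functions of the form $\phi\circ\delta_{p_n}$ on $Y_n$ (Lemma \ref{tools}), and the upper bound by discretizing $A_n$ and quoting Theorem \ref{sat1}, i.e.\ ultimately Spielman--Teng. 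Your exponential profile $f_n(t)=\frac{n}{2\pi}e^{-|t|/c_n}$ is exactly the volume growth of spheres in such a subdivided expander, so the two constructions produce essentially the same surfaces; but your analysis is self-contained and more elementary: the test function $u=t$ (legitimate by Lemma \ref{lem-minimax}, since $u$ need not satisfy the Neumann condition) replaces the appeal to Spielman--Teng, and a one-dimensional weighted Poincar\'e inequality for the Laplace-type weight replaces the transfer from the expander. Your Fourier-mode bound for $k\ne 0$ is the same mechanism as the paper's Lemma \ref{trick}. Two points in your $k=0$ analysis deserve to be written out: (i) the claim that restricting the Laplace measure to $[-D_n,D_n]$ with Neumann conditions can only increase the gap is true, but the proof is the extension-by-constants argument (extending a trial function by its boundary values does not increase the Dirichlet energy, while its variance over the full measure dominates its variance over the restricted one); and (ii) if you instead go through odd trial functions and the weighted Hardy inequality, you must justify discarding the even zero-mean functions, e.g.\ by the standard Sturm--Liouville fact that the first nontrivial Neumann eigenfunction of a one-dimensional problem with even weight is odd (an even first eigenfunction would have at least two interior zeros). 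Both routes close the argument, and the smoothing of $f_n$ near $t=0$ and $t=\pm D_n$ only changes the weight by bounded multiplicative factors, which is harmless for all the Rayleigh-quotient estimates.
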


Recall that $\lambda_1(A_n)$ is the first positive eigenvalue of the Laplacian on $A_n$ with Neumann boundary conditions. 

Assuming Proposition \ref{meat}, we prove now Theorem \ref{sat2}:

\begin{named}{Theorem \ref{sat2}}
There are $d$ and $C$ positive and a sequence $(X_n)$ of pairwise distinct triangulations of $\BS^2$ with degree at most $d$ and such that 
$$\lambda_1(X_n)\ge C\left(\frac{\log(\diam X_n)}{\diam X_n}\right)^2$$
for all $n$.
\end{named}
\begin{proof}
Let $A_n$ be Riemannian surfaces provided by Proposition \ref{meat}, and for each $n$ let $T_n$ be a triangulation of $A_n$ as  provided by Lemma \ref{discretize}. Denoting by $D_1,D_2$ the cycles in $T_n$ corresponding to the boundary components of $A_n$, let $X_n$ be obtained from $T_n$ by coning off $D_1$ and $D_2$, each one of them to a different point. By construction $X_n$ is a triangulation of $\BS^2$. To see that $X_n$ has uniformly bounded degree observe that $T_n$ does by Lemma \ref{discretize} and that $D_1$ and $D_2$ have uniformly bounded many edges because both boundary components of $A_n$ have unit length. This also shows that $T_n$ and $X_n$ are uniformly quasi-isometric to each other. Since $T_n$ and $\Sigma_n$ are uniformly quasi-isometric to each other, we obtain 
\begin{equation}\label{eq-bennys1}
\diam X_n\approx \diam T_n\approx\diam A_n\approx n^{10}\log(n)
\end{equation}
\begin{equation}\label{eq-bennys2}
\lambda_1(X_n)\approx\lambda_1(T_n)\approx\lambda_1(\Sigma_n)\approx n^{-20}
\end{equation}
It follows from \eqref{eq-bennys1} and \eqref{eq-bennys2} that there is some $C$ with
$$\lambda_1(X_n)\ge C\left(\frac{\log(\diam X_n)}{\diam X_n}\right)^2$$
for all $n$, as we wanted to prove.
\end{proof}

\section{}

In this section we construct a family of graphs $Y_n$ needed in the proof of Proposition \ref{meat}. We stress that the graphs $Y_n$ are not planar.

\begin{lem}\label{models}
There is a sequence $(Y_n,p_n)$ of rooted graphs with vertices of degree at most $3$, with
$$\vol Y_n\approx n^{11},\ \ \diam Y_n\approx\log(n)n^{10},\ \ \lambda_1(Y_n)\approx{n^{-20}}$$
and such that the function $\rho_{Y_n,p_n}:\BR\to\BR_+$ satisfying \eqref{eq:density} has the following properties:
\begin{enumerate}
\item For all $t\in\BR$ we have $\rho_{Y_n,p_n}(t)\precapprox n$.
\item For any two $t,s\in\BR$ with $\vert t-s\vert<\frac 12$ we have $\vert\rho_{Y_n,p_n}(t)-\rho_{Y_n,p_n}(s)\vert\le 3$.
\end{enumerate}
\end{lem}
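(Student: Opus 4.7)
The plan is to build $Y_n$ as a subdivision, with carefully chosen strand lengths, of a bounded-degree expander, as sketched in the introduction. Concretely, fix a family $(Z_n)$ of $3$-regular expanders with $|V(Z_n)| = n$ and $\lambda_1(Z_n) \geq c_0 > 0$ uniformly, so that $\diam Z_n \approx \log n$; pick a root $p_n \in V(Z_n)$; and, for each edge $e \in E(Z_n)$, let $k_e$ be an integer in $[n^{10}, 2n^{10}]$ to be chosen later. Define $Y_n$ by subdividing each $e$ into $k_e$ unit edges. Independently of the choice of the $k_e$, one has $\deg Y_n \leq 3$, $\vol Y_n \approx n^{11}$, and $\diam Y_n \approx n^{10}\log n$. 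Moreover, as a metric graph with unit edges, $Y_n$ is isometric to $Z_n$ endowed with edge lengths $k_e$, which is in turn bi-Lipschitz (with constant $2$) to $Z_n$ rescaled by $n^{10}$; since rescaling the metric by $n^{10}$ divides metric Laplacian eigenvalues by $n^{20}$ and since the metric and combinatorial first eigenvalues are comparable on bounded-degree graphs (compare Rayleigh quotients on piecewise linear functions), this yields $\lambda_1(Y_n) \approx \lambda_1(Z_n)/n^{20} \approx n^{-20}$.

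The bulk of the work is to control $\rho = \rho_{Y_n, p_n}$. The key local observation is that on a single strand $e = (u,v)$ of length $k_e$, parameterized as $[0, k_e]$ with $u$ at $0$, the distance is $\delta_{p_n}(x) = \min(d(u) + x,\ d(v) + k_e - x)$, where $d(\cdot) := d_{Y_n}(p_n, \cdot)$. This function is either monotonic (when $|d(u) - d(v)| = k_e$) or tent-shaped with apex at the level $t_e^{\ast} := (d(u) + d(v) + k_e)/2$. Accordingly, the contribution $\rho_e(t)$ of the strand $e$ to $\rho(t)$ takes values in $\{0, 1, 2\}$, jumps by $\pm 1$ at the two endpoint levels $d(u), d(v)$, and by $-2$ at $t_e^{\ast}$ in the tent case. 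Summing over the $O(n)$ strands gives property~(1) for free: $\rho(t) \leq 2|E(Z_n)| \precapprox n$ for every $t$.

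The main obstacle, and the only place where a genuine choice has to be made, is property~(2). The idea is to arrange the $k_e$ so that the finite set of critical levels
$$\mathcal{C} := \{d(u) : u \in V(Z_n)\} \cup \{t_e^{\ast} : e \in E(Z_n) \text{ a tent strand}\}\ \subset\ [0, 2n^{10}\log n]$$
has all $O(n)$ of its distinct elements separated by more than $\tfrac12$. Each $d(u)$ is a minimum of linear functions of the integer vector $(k_e)$ (the shortest-path sums in the weighted $Z_n$), and each $t_e^{\ast}$ is affine in that vector; so the set of $(k_e)$ making two prescribed critical levels lie within distance $1$ is a union of $O(1)$ slabs of codimension $1$ in the integer box $\prod_e ([n^{10}, 2n^{10}] \cap \BZ)$. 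Since this box has $(n^{10})^{|E(Z_n)|}$ points and there are only $O(n^2)$ pair-constraints carving out much smaller slabs, a standard union bound produces an admissible $(k_e)$ for all $n$ sufficiently large.

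With the $k_e$ so chosen, property~(2) drops out of the strand analysis. The function $\rho$ is piecewise constant off $\mathcal{C}$; at a tent apex $t = t_e^{\ast}$ only the strand $e$ contributes a jump, of size $-2$; at a vertex level $t = d(u_0)$ only the at most three edges of $Z_n$ incident to $u_0$ contribute a jump, each of size $\pm 1$ (a monotonic strand $(u_0, v)$ contributing $+1$ or $-1$ according to the sign of $d(v) - d(u_0)$, and a tent strand contributing $+1$ at each of its two endpoint levels). Hence the jump of $\rho$ at every critical level has absolute value at most $3$, and since any interval of length $<\tfrac12$ meets at most one element of $\mathcal{C}$, we conclude $|\rho(t) - \rho(s)| \leq 3$ whenever $|t - s| < \tfrac12$, as required.
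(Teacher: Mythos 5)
Your construction takes a genuinely different route at the one place where the paper has to do real work. The skeleton is the same (subdivide a trivalent expander, so that degree, volume, diameter and $\lambda_1$ come for free), and your local analysis of $\rho$ on a single strand --- jumps of $\pm 1$ at the two endpoint levels, a jump of $-2$ at a tent apex, hence a total jump of absolute value at most $3$ at any critical level that is "owned" by a single vertex or a single apex --- is correct and in fact more explicit than what the paper writes down. But where the paper achieves general position by a greedy local perturbation (repeatedly locating the smallest bad critical value and lengthening the strands adjacent to all but one of the offending points, which manifestly does not disturb the part of $\rho$ below that level), you try to achieve it in one shot by a generic choice of the subdivision lengths $(k_e)$, justified by a union bound. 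That is where the gap is.

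The problematic step is the claim that for two prescribed critical levels the bad set of $(k_e)$ is ``a union of $O(1)$ slabs of codimension $1$.'' The level $d(u)$ is not an affine function of $(k_e)$: it is the minimum of $\ell(\gamma)=\sum_{e\in\gamma}k_e$ over all candidate shortest paths $\gamma$ from $p_n$ to $u$, and the number of such paths is not bounded independently of $n$. Consequently the locus $\{|d(u)-d(u')|\le 1\}$ is a priori a union of one slab per pair of paths $(\gamma,\gamma')$, and the count of slabs enters the union bound multiplicatively. Even after the (necessary, and missing) observation that only paths with $O(\diam Z_n)=O(\log n)$ edges can be shortest when all weights lie in $[n^{10},2n^{10}]$, the number of relevant paths from $p_n$ to a fixed vertex is $3^{O(\log n)}=n^{O(1)}$ with an exponent governed by the diameter constant of your expander family, so the total number of slabs is $n^{O(1)}$ with an exponent you do not control and which may well exceed the $10$ in $n^{10}$; the union bound as stated therefore does not close. (The same issue hits the apex levels $t_e^\ast=(d(u)+d(v)+k_e)/2$ even harder, since they involve sums of two such minima.) The argument can be repaired --- either by an isolation-lemma type conditioning on a single edge in the symmetric difference of two competing paths, or by making the subdivision exponent large compared to the diameter constant of $(Z_n)$ --- but some such input is needed; alternatively, the paper's greedy subdivision sidesteps the counting entirely. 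A minor additional point: you cannot literally separate all distinct critical levels by \emph{more} than $\tfrac12$, since vertex levels are integers and apex levels are half-integers; what you actually need, and what your jump analysis uses, is that the levels attached to distinct vertices and distinct apexes do not \emph{coincide}, which suffices because an interval of length less than $\tfrac12$ meets $\tfrac12\BZ$ in at most one point.
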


We explain briefly the meanings of the numbered statements in Lemma \ref{models}: (1) asserts that the cardinality of the distance sphere $\delta_{Y_n,p_n}=t$ in $Y_n$ centered at $p_n$ is bounded from above by some multiple of $n$, and (2) asserts that the number of points in the distance spheres $\delta_{Y_n,p_n}=t$ and $\delta_{Y_n,p_n}=t+h$ jumps up or down by at most $3$ if $h<\frac 12$.
\smallskip

We define some terms used in the proof of Lemma \ref{models}. Given a rooted graph $(Y,p)$ we say that $t\in[0,\diam_pY]$ is a {\em critical value} of the distance function $\delta_p:Y\to[0,\diam_pY]$ if the density $\rho_{Y,p}:\BR\to\BR_+$ is not continuous at $t$. Notice that $0$ and $\diam_pY$ are critical values and that all critical values belong to $\frac 12\BZ$, and recall that $\rho_{Y,p}$ takes values in $\BN$. A critical value $t$ is {\em good} if the step of discontinuity of $\rho_{Y,p}$ at $t$ is at most $3$. The critical value $0$ is good if and only if $\deg(p)\le 3$. Claim (2) of Lemma \ref{models} is that all critical values of the distance function associated to $(Y_n,p_n)$ are good.

Obviously the choice of the number $3$ in the definition of {\em good critical point} is somewhat arbitrary. It is tailored to the fact that we will work with graphs of valence at most $3$. Basically, one should think of good critical values in terms of general position; we hope that figure \ref{fig1} makes this remark clear.

\begin{figure}[h]
        \centering
         \includegraphics[width=6cm]{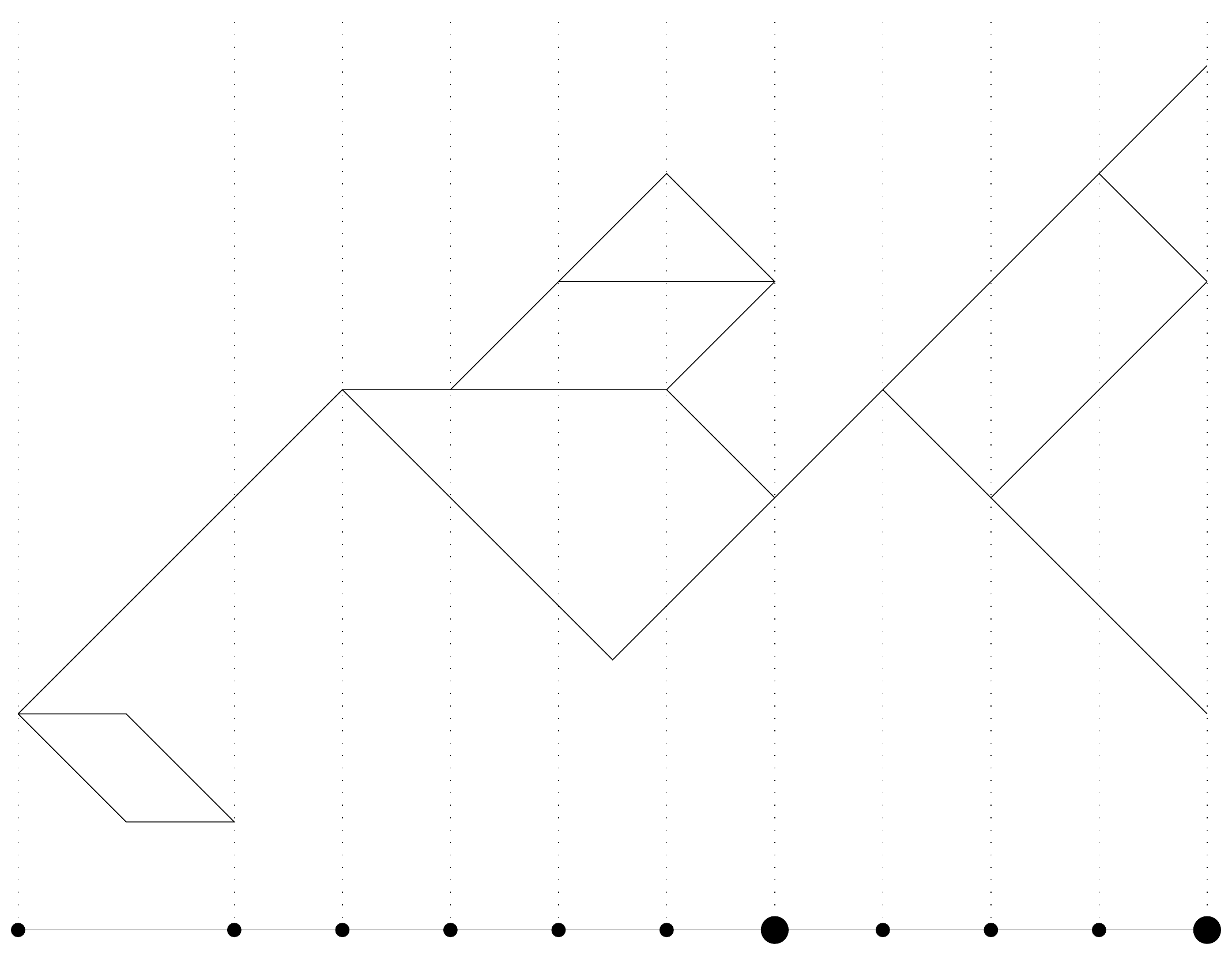}
         \caption{The distance function is the projection to the $x$-axes. The dots are the critical values; the good ones small and the bad ones large.}\label{fig1}
\end{figure}

\begin{bem}
Suppose that $Y$ is a finite graph of degree at most $3$, with no vertices of valence $1$, and that $p\in V(p)$ is a vertex. Both the number of critical points of $\delta_p:Y\to[0,\diam_pY]$ and $\max_{t\in[0,\diam_pY]}\rho_{Y,p}(t)$ are bounded by four times the number of trivalent vertices plus 2.
\end{bem}

\begin{proof}[Proof of Lemma \ref{models}]
Let $(Z_n)$ be an expander family consisting of trivalent graphs without vertices of valence $1$ and with $\vol Z_n\approx n$. That the sequence $(Z_n)$ is an expander means that there is $\epsilon>0$ with $\lambda_1(Z_n)\ge\epsilon$ for all $n$. It is known that this implies that $\diam(Z_n)\approx\log(n)$. In this section, every graph with an $n$ for subscript is going to be homeomorphic to $Z_n$. In particular, (1) in Lemma \ref{models} will be automatically satisfied by the remark above.

For each $n$ let $\hat Y_n$ be the graph obtained by subdividing each edge in $n^{10}$ edges. Metrically, this amounts to scaling the graph $Z_n$ by a factor $n^{10}$. It follows that
$$\vol \hat Y_n\approx n^{11},\ \ \lambda_1(\hat Y_n)\ge\frac{\epsilon}{n^{-20}},\ \ \diam\hat Y_n\approx\log(n)n^{10}$$
Choose now points $\hat p_n\in\hat Y_n$. It is easy to see that not all the critical values of the distance function $\delta_{\hat p_n}:\hat Y_n\to[0,\diam_{\hat p_n}\hat Y_n]$ are good, meaning that $(\hat Y_n,\hat p_n)$ does not satisfy (2) in the statement of Lemma \ref{models}. We are going to inductively perturb $\hat Y_n$ and construct 
$$(\hat Y_n^0,\hat p_n^0)=(\hat Y_n,\hat p_n),(\hat Y_n^1,\hat p_n^1),(\hat Y_n^2,\hat p_n^2),\dots$$
so that in every step more and more critical values are good. 

Suppose that we have constructed $(\hat Y_n^i,\hat p_n^i)$ with $\hat Y_n^i$ homeomorphic to $Z_n$, suppose that $t$ is the smallest bad critical value of $\delta_{\hat p_n^i}:\hat Y_n^i\to\BR$ and notice that $t>0$. Choose a point $x\in\hat Y_n^i$ at distance $\delta_{\hat p_n^i}(x)=t$ of the base point so that either $x$ is a trivalent vertex or a local maximum of $\delta_{\hat p_n^i}$. For each other point $y\in\hat Y_n^i$ with $\delta_{\hat p_n^i}(y)=t$ subdivide each edge adjacent to $y$ into two edges and let $\hat Y_n^{i+1}$ be the so obtained graph rooted at the vertex $\hat p_n^{i+1}$ corresponding to $\hat p_n^i$. Notice that $\rho_{\hat p_n^{i+1}}(s)=\rho_{\hat p_n^i}(s)$ for all $s<t$, that $t$ is still a critical value of $\rho_{\hat p_n^{i+1}}$, but that this time it is a good one. It follows that $\rho_{\hat p_n^{i+1}}$ has at least $i$ good critical values. Since $\hat Y_n^{i+1}$ is homeomorphic to $Z_n$ it follows that the total number of critical points is bounded by $4\vol Z_n+2$ and hence we have to repeat the process $N\le 4\vol Z_n+2$ times to end up with a graph 
$$(Y_n,p_n)=(\hat Y_n^N,\hat p_n^N)$$
for which the function $\delta_{p_n}:Y_n\to[0,\diam_{p_n}Y_n]$ has only good critical points; in particular (2) in Lemma \ref{models} holds. Moreover, since the graph $Y_n$ has been obtained from $Z_n$ by subdividing each edge somewhere between $n^{10}$ times and $n^{10}+4\vol Z_n+2$ times, we still have
$$\vol Y_n\approx n^{11},\ \ \lambda_1(Y_n)\approx n^{-20},\ \ \diam Y_n\approx\log(n)n^{10}$$
The validity of (1) follows from the fact that $Y_n$ is homeomorphic to $Z_n$ for all $n$.
\end{proof}

\section{}\label{sec:meat}

We can now prove Proposition \ref{meat}:

\begin{named}{Proposition \ref{meat}}
There is $d$ and a sequence $A_n$ of Riemannian surfaces homeomorphic to $\BS^1\times[0,1]$, with totally geodesic boundary and $d$-bounded geometry, and such that:
$$\vol(A_n)\approx n^{11},\ \diam(A_n)\approx n^{10}\log(n),\ \ \lambda_1(A_n)\approx n^{-20}$$
Moreover, each component of $\D A_n$ has unit length.
\end{named}

Let $(Y_n,p_n)$ be the sequence of rooted graphs provided by Lemma \ref{models}, set
$$R(n)=\diam_{p_n}(Y_n)$$
and consider the functions
$$\rho_{Y_n,p_n}:\BR\to\BR_+$$
satisfying \eqref{eq:density}. The support of $\rho_{Y_n,p_n}$ is $[0,R(n)]$ and recall that for every $t$ therein we have $\rho_{Y_n,p_n}(t)\ge 1$. On the other hand, we have $\rho_{Y_n,p_n}(t)\precapprox n$ by Lemma \ref{models} (1).

Lemma \ref{models} (2) implies that there is some positive constant $C$ such that for each $n$ there is a smooth function 
$$\sigma_n:\BR\to\BR_+$$
satisfying:
\begin{enumerate}
\item $C^{-1}\rho_{Y_n,p_n}(t)\le \sigma_n(t)\le C\rho_{Y_n,p_n}(t)$ for all $t\in[0,R(n)]$,
\item $\vert\sigma_n''(t)\vert\le C$ for all $t$, and 
\item $\sigma(t)=1$ for all $t\in[0,1]\cup[R(n)-1,R(n)]$.
\end{enumerate}
Denote by $\BS^1$ the circle of length $1$ and consider the cylinder 
$$A_n=([0,R(n)]\times\BS^1,dt^2+\sigma_n^2(t)d\theta^2)$$.
Each of the two boundary components of $\D A_n$ is totally geodesic of length $1$ and has a collar neighborhood isometric to a product. Observe that there is an isometric action $\BS^1\actson A_n$ whose orbits agree with the fibers of the projection
$$\pi:A_n\to[0,R(n)]$$
We claim that the surfaces $A_n$ have uniformly bounded geometry.

\begin{lem}\label{bounded-curvature}
There is $d$ such that $A_n$ has $d$-bounded geometry for all $d$. 
\end{lem}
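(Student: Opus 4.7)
The plan is to verify the three conditions in the definition of $d$-bounded geometry for the double $\tilde A_n$ of $A_n$: dimension at most $d$, sectional curvature bounded in absolute value by $d$, and shortest closed geodesic of length at least $1/d$. The dimension equals $2$, so the work lies in producing uniform (in $n$) bounds on the curvature and on the systole.

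First I would check that the doubling actually produces a smooth Riemannian torus. By property (3) of $\sigma_n$, the function $\sigma_n$ is identically $1$ on the two boundary collars $[0,1]$ and $[R(n)-1,R(n)]$, so every derivative of $\sigma_n$ vanishes at $t=0$ and at $t=R(n)$. Reflecting across each boundary and extending periodically therefore yields a smooth function $\tilde\sigma_n : \BR/2R(n)\BZ \to \BR_+$, and the doubled surface
$$\tilde A_n = (\BR/2R(n)\BZ) \times \BS^1, \quad g_{\tilde A_n} = dt^2 + \tilde\sigma_n(t)^2\, d\theta^2$$
is a smooth warped-product metric on the topological torus.

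The curvature bound is then immediate from the standard formula $K(t,\theta) = -\tilde\sigma_n''(t)/\tilde\sigma_n(t)$ for the Gaussian curvature of such a warped-product surface. Property (2) of $\sigma_n$ gives $|\tilde\sigma_n''| \le C$, while property (1) combined with $\rho_{Y_n,p_n}(t) \ge 1$ on $[0,R(n)]$ gives $\tilde\sigma_n \ge C^{-1}$, whence $|K| \le C^2$ uniformly in $n$.

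The main obstacle is the lower bound on the systole, and the key observation is Clairaut's relation: along any geodesic of $g_{\tilde A_n}$ the quantity $\tilde\sigma_n^2 \dot\theta$ is constant, so the $\theta$-coordinate is either identically constant or strictly monotone along the geodesic. Consequently, any lift of a closed geodesic of $\tilde A_n$ to the universal cover $\BR^2$ runs between two distinct deck-equivalent points $(t_0,\theta_0)$ and $(t_0 + 2R(n)a, \theta_0 + b)$ with $(a,b) \in \BZ^2\setminus\{(0,0)\}$; in particular no closed geodesic on $\tilde A_n$ is contractible. If $a \ne 0$ the length is at least $2R(n)|a| \ge 2R(n)$; if $a = 0$ and $b \ne 0$, dropping the $\dot t^2$ term in the length integrand and using that $\int |\dot\theta|\,ds \ge |b|$ gives length at least $(\min \tilde\sigma_n)\,|b| \ge C^{-1}$. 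Taking $d$ to be a sufficiently large constant depending only on $C$ then finishes the proof.
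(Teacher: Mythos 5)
Your proof is correct and follows essentially the same route as the paper: the warped-product curvature formula $|K|=|\sigma_n''|/\sigma_n\le C^2$ for the curvature bound, and Clairaut's relation (so that $\theta$ is either constant or strictly monotone along a geodesic) to split closed geodesics into those of length $\ge 2R(n)$ and those of length $\ge\min\tilde\sigma_n\ge C^{-1}$. The only addition is your explicit check, via property (3) of $\sigma_n$, that the double is a smooth warped-product torus — a point the paper leaves implicit — and your phrasing of the systole bound through homotopy classes in the universal cover rather than directly through monotonicity; both are harmless refinements of the same argument.
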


\begin{proof}
We need to prove that the double $\Sigma_n$ of $A_n$ has $d$-bounded geometry for some $d$ and all $n$. To do this we need to estimate the sectional curvature of $\Sigma_n$ and the length of the shortest closed geodesic there in. Notice that by symmetry it suffices to bound the sectional curvature of $A_n$ at $(t,0)$. Since $A_n$ admits an isometric $\BS^1$-action whose orbit over $t\in[0,R(n)]$ has length $\sigma(t)$, it is classical that the sectional curvature at $(t,0)$ is given by 
$$\kappa_{A_n}(t,\theta)=\frac{\sigma_n''(t)}{\sigma_n(t)}$$ 
Since $\sigma_n(t)\ge C^{-1}$ and $\vert\sigma''_n(t)\vert\le C$ for all $t\in[0,R(n)]$ we deduce that $A_n$, and hence $\Sigma_n$, has sectional curvature bounded by $\vert\kappa_{\Sigma_n}\vert\le C^2$.

The action $\BS^1\actson A_n$ extends to an isometric action $\BS^1\actson\Sigma_n$ with associated Killing vector field $\frac{\D}{\D\theta}$. If $\gamma(t)=(T(t),\theta(t))$ is a geodesic in $\Sigma_n$ then $\langle\gamma'(t),\frac{\D}{\D\theta}\rangle$ is constant by Claireaux theorem. In particular, it follows $t\mapsto \theta(t)$ is monotone and hence that either $\gamma$ is orthogonal to the orbits of $\BS^1$ or has at least length $\min_{t}\sigma_n(t)\ge C^{-1}$. Since geodesics orthogonal to $\BS^1$-orbits have length at least $2R(n)$ we have proved that the surfaces $\Sigma_n$ have uniformly bounded geometry. 
\end{proof}

We estimate the diameter and volume of $A_n$.

\begin{lem}\label{lem-diam-vol}
$\vol A_n\approx n^{11}$ and $\diam A_n\approx \log(n)n^{10}$.
\end{lem}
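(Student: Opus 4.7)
The plan is to compute both quantities directly from the product-type metric
$dt^2 + \sigma_n^2(t)\,d\theta^2$ on $[0,R(n)]\times\BS^1$, using the two sandwich estimates already available: $\sigma_n\asymp \rho_{Y_n,p_n}$ from property (1), and $\sigma_n\precapprox n$ from Lemma \ref{models}(1), together with $R(n)=\diam_{p_n}Y_n\approx\log(n)n^{10}$ and $\int_0^{R(n)}\rho_{Y_n,p_n}(t)\,dt=\vol Y_n\approx n^{11}$.

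For the volume, Fubini gives
$$\vol A_n=\int_0^{R(n)}\!\!\int_{\BS^1}\sigma_n(t)\,d\theta\,dt=\int_0^{R(n)}\sigma_n(t)\,dt.$$
Using $C^{-1}\rho_{Y_n,p_n}\le\sigma_n\le C\rho_{Y_n,p_n}$ and integrating, one sandwiches $\vol A_n$ between constant multiples of $\vol Y_n\approx n^{11}$, which gives the claim.

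For the diameter I would argue by two-sided comparison against $R(n)$. The lower bound is immediate: the projection $\pi\colon A_n\to[0,R(n)]$ is $1$-Lipschitz, so any two points on opposite boundary components satisfy $d_{A_n}(x,y)\ge R(n)$. For the upper bound, given $(t_1,\theta_1),(t_2,\theta_2)\in A_n$ concatenate the meridian $s\mapsto(s,\theta_1)$ from $t_1$ to $t_2$ with the arc $s\mapsto(t_2,s)$ from $\theta_1$ to $\theta_2$; these have lengths $|t_2-t_1|\le R(n)$ and at most $\tfrac12\sigma_n(t_2)$ respectively. Since $\sigma_n(t)\precapprox n$ uniformly, this yields
$$\diam A_n\le R(n)+O(n)\approx R(n),$$
because $n=o(\log(n)n^{10})$. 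Combining with the lower bound gives $\diam A_n\approx R(n)\approx\log(n)n^{10}$.

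There is no genuine obstacle here; the only thing to watch is that the meridian length bound really is $R(n)$ and not something longer (it is, since the metric in the $t$-direction is the standard $dt^2$), and that the shortcut via one $\BS^1$-orbit is admissible, which it is because each such orbit has length $\sigma_n(t)\precapprox n$, negligible compared to $R(n)$. Everything reduces to the two defining inequalities on $\sigma_n$ and the known asymptotics for $(Y_n,p_n)$ from Lemma \ref{models}.
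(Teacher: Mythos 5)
Your proposal is correct and follows essentially the same route as the paper: compute $\vol A_n=\int_0^{R(n)}\sigma_n(t)\,dt$ and sandwich it by $\vol Y_n$ via property (1) of $\sigma_n$, then get the diameter lower bound from the $1$-Lipschitz projection $\pi$ and the upper bound by travelling along a meridian and then along a fiber of length $\sigma_n(t)\precapprox n$. The only difference is presentational — you spell out the concatenated path explicitly where the paper merely notes that fibers have length $\precapprox n$ and that a length-$R(n)$ arc meets every fiber.
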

\begin{proof}
The Riemannian volume form of $A_n$ is given by $d\vol_{A_n}=\sigma_n(t)dtd\theta$ in the coordinates $(t,\theta).$  In particular we have
$$\vol A_n=\int_0^{R(n)}\sigma_n(t)dt\approx \int_0^{R(n)}\rho_{Y_n,p_n}(t)dt=\vol Y_n\approx n^{11}$$
Here the second to last equality follows from the definition of $\rho_{Y_n,p_n}$ \eqref{eq:density}. The last statement is true by Lemma \ref{models}.

To estimate the diameter notice that the distance between points in $\pi^{-1}(t)$ is bounded by $\sigma_n(t)\le C\rho_{Y_n,p_n}(t)\precapprox n$ by assertion (1) in Lemma \ref{models}. On the other hand $A_n$ contains a geodesic arc of length exactly $R(n)=\diam_{p_n}(Y_n)\simeq\log(n)n^{10}$ intersecting every fiber of $\pi$ we deduce that
$$\diam A_n \precapprox \log(n)n^{10}$$
Finally, since the projection $\pi:A_n\to[0,R(n)]$ is 1-Lipschitz it follows that $\diam(A_n)\ge R(n)\succapprox \log(n)n^{10}$.
\end{proof}

Recall that $\lambda_1(A_n)$ is the first positive eigenvalue of the Laplacian on $A_n$ with Neumann boundary conditions. In order to prove Proposition \ref{meat} it remains to bound $\lambda_1(A_n)$; we start with the upper bound.

\begin{lem}\label{lem-up}
$\lambda_1(A_n)\precapprox n^{-20}$.
\end{lem}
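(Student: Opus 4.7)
The plan is to construct, directly on $A_n$, two Lipschitz test functions with disjoint supports and Rayleigh quotients $\precapprox n^{-20}$, by adapting the pigeonhole construction of Proposition~\ref{construct-function} to the 1D weighted setting determined by the projection $\pi: A_n \to [0, R(n)]$ onto the $t$-coordinate. The key observation is that $\pi_*(d\vol_{A_n}) = \sigma_n(t)\, dt$, so for any Lipschitz $F: [0, R(n)] \to \BR$ the lift $f = F \circ \pi$ satisfies the analog of Lemma~\ref{tools}:
\[
\CR_{A_n}(F \circ \pi) = \frac{\int_0^{R(n)} F'(t)^2 \sigma_n(t)\, dt}{\int_0^{R(n)} F(t)^2 \sigma_n(t)\, dt}.
\]
Thus producing small-Rayleigh test functions on $A_n$ reduces to producing such functions on the weighted interval $\left([0, R(n)], \sigma_n(t)\, dt\right)$.

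Next I would run the construction in the proof of Proposition~\ref{construct-function} with $\sigma_n$ in place of $\rho_{X,p}$. The hypothesis $\vol \le V \cdot \diam^r$ is satisfied uniformly in $n$ with $r = 2$, since $\int_0^{R(n)} \sigma_n \approx n^{11}$ is much smaller than $R(n)^2 \approx n^{20}(\log n)^2$. The lower bound $\sigma_n \ge C^{-1}$ (built into the construction of $\sigma_n$) plays the role of the bound $\rho_{X,p} \ge 1$ in the graph case. Setting $k = \lfloor \log(R(n)/2) \rfloor$ and partitioning $[0, e^k]$ into $k$ subintervals of length $e^k/k$, the pigeonhole argument produces an index $j$ such that the tent function $F_1: [0, R(n)] \to \BR$ of height $e^k/k$ supported on the three consecutive subintervals $E_{j-1} \cup E_j \cup E_{j+1}$ near $t = 0$ satisfies
\[
\CR_{A_n}(F_1 \circ \pi) \precapprox \frac{k^2}{e^{2k}} \approx \frac{(\log R(n))^2}{R(n)^2} \approx \frac{(\log n)^2}{(n^{10}\log n)^2} = n^{-20}.
\]
Running the same construction near $t = R(n)$ yields a second tent function $F_2$; by the choice of $k$, the supports of $F_1 \circ \pi$ and $F_2 \circ \pi$ on $A_n$ are disjoint.

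To conclude, I would invoke Lemma~\ref{lem-minimax} together with the disjoint-support remark following~\eqref{eq:ambrosia} --- both of which transfer from the graph/smooth setting to the Lipschitz setting on $A_n$ via standard smoothing --- to obtain
\[
\lambda_1(A_n) \le \max_i \CR_{A_n}(F_i \circ \pi) \precapprox n^{-20}.
\]
The main conceptual point (and essentially the only step requiring verification) is that the proof of Proposition~\ref{construct-function} is entirely 1D once Lemma~\ref{tools} has been invoked: replacing $\rho_{X,p}$ by $\sigma_n$ and absorbing the constant $C$ from $\sigma_n \ge C^{-1}$ into the hypothesis constant $V$ leaves every estimate uniform in $n$, so no genuinely new difficulty arises.
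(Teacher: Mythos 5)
Your argument is correct, and it takes a genuinely different route from the paper. The paper proves Lemma \ref{lem-up} by going back to the discrete world: it triangulates $A_n$ via Lemma \ref{discretize}, observes that the resulting graph $T_n$ is planar with $\diam T_n\approx n^{10}\log n$, applies Theorem \ref{sat1} to get $\lambda_1(T_n)\precapprox n^{-20}$, and transfers the bound back to $A_n$ through Mantuano's comparison theorem. You instead exploit the rotational symmetry of $A_n$ to reduce everything to the weighted interval $([0,R(n)],\sigma_n(t)\,dt)$ and rerun the pigeonhole/tent-function argument of Proposition \ref{construct-function} there, using $\sigma_n\ge C^{-1}$ in place of $\rho_{X,p}\ge 1$ and $\int\sigma_n\approx n^{11}\ll R(n)^2$ in place of the hypothesis $\vol X\le V(\diam X)^r$. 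Your identity $\CR_{A_n}(F\circ\pi)=\int F'^2\sigma_n\,dt\big/\int F^2\sigma_n\,dt$ is the exact continuous analogue of Lemma \ref{tools} (indeed the paper uses the same computation, in the other direction, for the lower bound in the proof of Proposition \ref{meat}), and the disjoint-support mediant inequality plus Lemma \ref{lem-minimax} closes the argument. What your route buys is economy and transparency: it avoids the discretization/Mantuano round trip entirely, and it makes explicit that no Spielman--Teng input is needed here --- in the paper's route, the application of Theorem \ref{sat1} to $T_n$ in fact lands in the case $\vol T_n\ll(\diam T_n)^2$ handled by Proposition \ref{construct-function} alone. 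What the paper's route buys is that it requires no new verification: it reuses Theorem \ref{sat1} and Mantuano's theorem as black boxes, whereas you must check (as you correctly note) that the one-dimensional pigeonhole goes through with a weight bounded below by $C^{-1}$ rather than $1$, and you must handle the minor technical point that your tent functions are Lipschitz rather than smooth. That last point is genuinely harmless: either mollify $F_1,F_2$ slightly, preserving disjoint supports and changing the Rayleigh quotients by $o(1)$, or observe that the approximation argument in the proof of Lemma \ref{lem-minimax} already extends the minimax over $C^\infty(M)$ to Lipschitz functions.
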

\begin{proof}
By Lemma \ref{bounded-curvature}, the surfaces $A_n$ have $d$-bounded geometry for some uniform $d$. In particular, there are by Lemma \ref{discretize} constants $d'$ and $L$ such that for all $n$ the surface $A_n$ admits a triangulation $T_n$ of valence at most $d'$ and which is $L$-quasi-isometric to $A_n$. From Lemma \ref{lem-diam-vol} we obtain that $\diam T_n\approx \log(n)n^{10}$. Since $T_n$ is planar it follows from Theorem \ref{sat1} that
$$\lambda_1(T_n)\precapprox n^{-20}$$
Finally, using again that $A_n$ and $T_n$ are uniformly quasi-isometric to each other, it follows that $\lambda_1(A_n)\approx\lambda_1(T_n)$.
\end{proof}

It remains to bound $\lambda_1(A_n)$ from below. When doing so we will use the following fact:

\begin{lem}\label{trick}
Let $\phi:[0,R]\to(0,\infty)$ be a smooth function which is constant near the boundary and consider the surface
$$A=([0,R]\times\BS^1,dt^2+\phi^2d\theta^2)$$
If $\lambda_1(A)<\frac {4\pi^2}{\max\{\phi(t)^2\vert t\in[0,R]\}}$ then every $\lambda_1$-eigenfunction $f\in C^\infty_{N}(S)$ is constant on the orbits of the isometric action $\BS^1\actson A$, $(\eta,(t,\theta))\mapsto(t,\eta+\theta)$.
\end{lem}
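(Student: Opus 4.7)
The plan is to use the $\BS^1$-symmetry to split any $\lambda_1$-eigenfunction $f\in C^\infty_N(A)$ as $f=\bar f+g$, where
$$\bar f(t)=\int_{\BS^1}f(t,\theta)\,d\theta$$
is the orbit-average and $g=f-\bar f$ has zero $\theta$-mean for every fixed $t$. Because $\phi$ is constant near $\D A$, the vector field $\partial_t$ is the inward normal there and averaging in $\theta$ commutes with $\partial_t$, so both $\bar f$ and $g$ lie in $C^\infty_N(A)$. Denoting the $\BS^1$-action by $\rho_\eta(t,\theta)=(t,\eta+\theta)$, the Laplacian commutes with each $\rho_\eta^*$, and hence $\Delta\bar f=\lambda_1\bar f$ and $\Delta g=\lambda_1 g$. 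In particular, if $g\not\equiv 0$ then $\CR_A(g)=\lambda_1$.

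I then plan to derive an a priori lower bound on $\CR_A(g)$ from a fiberwise Poincar\'e inequality. For every fixed $t$ the function $\theta\mapsto g(t,\theta)$ has zero mean on the unit-length circle $\BS^1$, so
$$\int_{\BS^1}(\partial_\theta g)^2\,d\theta\ge 4\pi^2\int_{\BS^1}g^2\,d\theta.$$
Using $d\vol_A=\phi(t)\,dt\,d\theta$, the pointwise inequality $\Vert\DD g\Vert^2\ge\phi(t)^{-2}(\partial_\theta g)^2$, and the elementary bound $\phi(t)^{-1}\ge\phi(t)/\max\phi^2$, this integrates to
$$\int_A\Vert\DD g\Vert^2\,d\vol_A\ge\int_0^R\frac{1}{\phi(t)}\int_{\BS^1}(\partial_\theta g)^2\,d\theta\,dt\ge\frac{4\pi^2}{\max\phi^2}\int_A g^2\,d\vol_A,$$
i.e.\ $\CR_A(g)\ge 4\pi^2/\max\phi^2$. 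Combined with the hypothesis $\lambda_1(A)<4\pi^2/\max\phi^2$ this forces $g\equiv 0$, and hence $f=\bar f$ depends only on $t$, i.e.\ is constant on the $\BS^1$-orbits.

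The only step that requires care is justifying that the orbit-averaging operation really takes an eigenfunction in $C^\infty_N(A)$ back to $C^\infty_N(A)$, with the correct eigenvalue; this is routine since the $\rho_\eta$ are smooth isometries preserving $\D A$ setwise and $\phi$ is constant near $\D A$, so differentiation in $t$ passes through the average and the Neumann condition is preserved. Everything else reduces to the one-dimensional Poincar\'e inequality on the unit circle, whose sharp first non-zero eigenvalue $4\pi^2$ is precisely what appears in the hypothesis of the lemma.
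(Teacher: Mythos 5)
Your proposal is correct and is essentially the paper's own argument: decompose the eigenfunction into its orbit average plus a fiberwise zero-mean part (both still $\lambda_1$-eigenfunctions since the Laplacian commutes with the isometric $\BS^1$-action), then rule out the zero-mean part via the Poincar\'e inequality on each circle fiber, whose first eigenvalue is $4\pi^2/\phi(t)^2\ge 4\pi^2/\max\phi^2$. The only cosmetic difference is that you phrase the fiberwise inequality on the unit-length coordinate circle and insert the metric factors by hand, whereas the paper works directly with $\lambda_1(S(t))$ for the circle of length $\phi(t)$; the computations are identical.
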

\begin{proof}
If $f$ is a $\lambda_1$-eigenfunction, then for all $\eta\in\BS^1$ the function $f(t,\theta)=f(t,\theta+\eta)$ is also a $\lambda_1$-eigenfunction. In particular, the average 
$$\hat f(t,\theta)=\int_{\BS^1} f(t,\theta+\eta)d\eta$$
along the orbits $S(t)$ of the $\BS^1$-action is also a $\lambda_1$-eigenfunction. We need to prove that $f=\hat f$. Supposing that this is not the case, then $F=f-\hat f$ is non-zero, a $\lambda_1$-eigenfunction, and satisfies
$$0=\int_{S(t)}F(t,\theta)d\theta$$
for all $t$. Notice that this last equality implies that
$$\frac{4\pi^2}{\phi(t)^2}=\lambda_1(S(t))\le\frac{\int_{S(t)}\Vert\frac{\D F}{\D\theta}(t,\theta)\Vert^2d\vol_{S(t)}(\theta)}{\int_{S(t)}F(t,\theta)^2d\vol_{S(t)}(\theta) }$$
We estimate the Rayleigh quotient of $F$:
\begin{align*}
\int_A\Vert\DD F_{(t,\theta)}\Vert^2d\vol_A(t,\theta) 
& \ge \int_A\Vert\frac{\D F}{\D\theta}(t,\theta)\Vert^2d\vol_A(t,\theta)\\
& =\int_0^R\int_{S(t)}\Vert\frac{\D F}{\D\theta}(t,\theta)\Vert^2d\vol_{S(t)}(\theta) dt\\
& \ge \int_0^R\lambda_1(S(t))\int_{\BS^1_t}F(t,\theta)^2d\vol_{S(t)}(\theta) dt\\
& = \int_0^R\frac {4\pi^2}{\phi(t)^2}\int_{S(t)}F(t,\theta)^2d\vol_{S(t)}(\theta) dt\\
&\ge \frac {4\pi^2}{\max\{\phi(t)^2\vert t\in[0,R]\}}\int_0^R\int_{S(t)}F(t,\theta)^2d\vol_{S(t)}(\theta) dt\\
&\ge \frac {4\pi^2}{\max\{\phi(t)^2\vert t\in[0,R]\}}\int_AF(t,\theta)^2d\vol_A(t,\theta)
\end{align*}
Since $F$ is a $\lambda_1$-eigenfunction, we obtain from the computation above that
$$\lambda_1(A)=\CR_A(F)=\frac{\int_A\Vert\DD F_{(t,\theta)}\Vert^2d\vol_A(t,\theta)}{\int_AF(t,\theta)^2d\vol_A(t,\theta)}\ge \frac {4\pi^2}{\max\{\phi(t)^2\vert t\in[0,R]\}}$$
contradicting our assumption.
\end{proof}

We are now ready to prove Proposition \ref{meat}:

\begin{proof}[Proof of Proposition \ref{meat}]
Having proved Lemma \ref{bounded-curvature}, Lemma \ref{lem-diam-vol} and Lemma \ref{lem-up}, it remains to bound $\lambda_1(A_n)$ from below. 

By construction, the fibers of the projection $\pi:A_n\to[0,R(n)]$ are the orbits of the isometric circle action $\BS^1\actson A_n$. Moreover, as we already observed when proving Lemma \ref{lem-diam-vol}, each such fiber has length $\precapprox n$. In particular, we obtain from Lemma \ref{trick} that every $\lambda_1(A_n)$-eigenfunction $f\in C^\infty_{N}(A_n)$ is constant along the fibers of $\pi$. This means that there is a function
$$F:[0,R(n)]\to\BR$$
with $f=F\circ\pi$. Consider the 2-dimensional space $W\subset C^\infty([0,R(n)])$ spanned by the restriction of $F$ to $[0,R(n)]$ and the constant function $\BONE(x)=1$. We can summarize much of what we have said so far in the following equation:
\begin{equation}\label{eq123}
\lambda_1(A_n)=\max\{\CR_{A_n}(\phi\circ\pi)\vert \phi\in W\}
\end{equation}
Recall that on the other hand we have the distance function
$$\delta_{p_n}:Y_n\to[0,\diam_{p_n}(Y_n)]$$
and by the minimax principle we have
\begin{equation}\label{eq1234}
\lambda_1(Y_n)\le \max\{\CR_{Y_n}(\phi\circ\delta_p)\vert \phi\in W\}
\end{equation}
We estimate $\CR_{A_n}(\phi\circ\pi)$ for $\phi\in W$:
\begin{align*}
\int_{A_n}\Vert\DD (\phi\circ\pi)\Vert^2d\vol_S(t,\theta)
&=\int_{A_n} \phi'(t)^2d\vol_S(t,\theta)\\
&=\int_0^{R(n)} \phi'(t)^2\sigma_n(t) dt\\
&\ge\frac 1C\int_0^{R(n)} \phi'(t)^2\rho_{Y_n,p_n}(t) dt
\end{align*}
\begin{align*}
\int_{A_n}(\phi\circ\pi)^2d\vol_S(t,\theta)
&=\int_{A_n} \phi(t)^2d\vol_S(t,\theta)\\
&=\int_0^{R(n)} \phi(t)^2\sigma_n(t) dt\\
&\le C\int_0^{R(n)} \phi(t)^2\rho_{Y_n,p_n}(t) dt
\end{align*}
Combining these two inequalities with Lemma \ref{tools} we obtain
$$\CR_{A_n}(\phi\circ\pi)\ge\frac 1{C^2}\CR_{Y_n}(\phi\circ\delta_{p_n})$$
for all $\phi\in W$. In particular \eqref{eq123}, \eqref{eq1234} and Lemma \ref{models} imply that 
$$\lambda_1(A_n)\ge\frac 1{C^2}\lambda_1(Y_n)\succapprox n^{-20}$$
Which concludes the proof of Proposition \ref{meat}.
\end{proof}

\bigskip

\noindent Department of Mathematics, University of Michigan.
\newline \noindent
\texttt{llouder@umich.edu}

\bigskip

\noindent Department of Mathematics, University of British Columbia.
\newline \noindent
\texttt{jsouto@math.ubc.ca}

\end{document}